\theoremstyle{thmstyleone}%
\newtheorem{theorem}{Theorem}
\newtheorem{proposition}[theorem]{Proposition}
\newtheorem{corollary}{Corollary}
\newtheorem{lemma}{Lemma}
\theoremstyle{thmstyletwo}%
\newtheorem{remark}{Remark}%
\theoremstyle{thmstylethree}%
\begin{document}

\title[Article Title]{$L^p$ boundedness, r-nuclearity and approximation of pseudo-differential operators on $\hbar\mathbb{Z}^n$}


\author*[1]{\fnm{Juan Pablo} \sur{Lopez}}\email{juan.lopez.holguin@correounivalle.edu.co}

\affil*[1]{\orgdiv{Departamento de matematicas}, \orgname{Universidad del Valle}, \orgaddress{ \city{Cali}, \postcode{760042}, \state{Valle del Cauca}, \country{Colombia}}}

\abstract{In this work sufficient conditions on the order of the symbol are developed to ensure boundedness, compactness and r-nuclearity of pseudo-differential operators in $\hbar\mathbb{Z}^n$. In addition, these conditions allow us to obtain growth estimates for the eigenvalues of some elliptic operators, in particular perturbed discrete Schrödinger operator.}

\keywords{Schrödinger operator, pseudo-differential, r-nuclear, eigenvalues}

\maketitle

\section{Introduction}\label{sec1}
In this paper we show some spectral properties for Schrödinger operators 
\begin{equation}\label{Schrodinger op}
(H_{\hbar,V}f)(k)=\Delta f+ V(k)f(k) \text{  for } k\in\hbar\mathbb{Z}^n \text{ and } V(\cdot)\geq0,
\end{equation}
where $\Delta$ is the discrete Laplacian defined as 
\[ (\Delta f)(k)=\frac{1}{\hbar^2}\sum_{j=1}^{n}f(k+\hbar e_j)-2f(k)+f(k-\hbar e_j),   \]
and $V(k)\geq 0$ being polynomial of order $\mu>0$. A special case are the discrete anharmonic oscillators, where $V(k)=|k|^{2l}$ and $l$ being a natural number.\\
\indent There has been intensive research on discrete Schrödinger operators \cite{rabinovich2008essential,rabinovich2004pseudodifference,rabinovich2010exponential,barrera2019numerical,albeverio2004schrodinger}, but the author do not find any reference with polynomial potentials. The importance of the discrete case lies in the computations based in finite differences, if we use these methods to compute the eigenvalues of Schrodinger operator on $\mathbb{R}^n$, we will obtain an approximation of the eigenvalues of \eqref{Schrodinger op}.\\ 

\indent This work is inspired by the spectral study of anharmonic operators presented in \cite{chatzakou2021class}, then sufficient conditions on the order of the symbol are developed to ensure boundedness, compactness and r-nuclearity of pseudo-differential operators in $\hbar\mathbb{Z}^n$. Additionally, we use these conditions to prove the r-nuclearity of the operator $(H_{\hbar,V}+\lambda I)^{-1}$ and conclude growth estimates for the eigenvalues of $H_{\hbar,V}$.\\
\indent Our work is based on the discrete pseudo-differential theory presented in \cite{molahajloo2010pseudo,botchway2020difference,botchway2024semi}.  Given a measurable function $\sigma$ on $\hbar\mathbb{Z}^n\times\mathbb{T}^n$, we define the pseudo-differential operator $T_\sigma$ acting on functions $a:\hbar\mathbb{Z}^n\to \mathbb{C}$ with as
\[(T_\sigma a )(k)=\int_{\mathbb{T}^n} e^{2\pi\frac{i}{\hbar}k\theta}\sigma (k,\theta) \hat{a}(\theta)d\theta, \]
where $\hat{a}$ is the Fourier transform $\mathcal{F}_{\hbar\mathbb{Z}^n}a$ defined as
\[(\mathcal{F}_{\hbar\mathbb{Z}^n}a)(\theta)=\sum_{k\in\hbar\mathbb{Z}^n} a(k)e^{-2\pi\frac{i}{\hbar}k\theta}. \]
\noindent These operators can be interpreted as the discretization of the input/output functions of pseudo-differential operators. Furthermore, the finite difference operators are included in this class \cite{botchway2020difference}.\\
\indent Additionally, we can associate an infinite matrix to 
$T_\sigma$ as in \cite{botchway2024semi}, using the representation
\begin{equation}\label{matrix operator}
(T_\sigma a )(k)=\sum_{m\in\hbar\mathbb{Z}^n} (\mathcal{F}_{\mathbb{T}^n} \sigma)(k,k-m)a(m),
\end{equation}
where the matrix is $(\sigma_{k,m})=(\mathcal{F}_{\mathbb{T}} \sigma)(k,k-m)$ and the Fourier transform in $\mathbb{T}$ is defined as
\begin{equation}\label{Fourier sigma}
(\mathcal{F}_{\mathbb{T}^n}\sigma)(k,m)=\int_{\mathbb{T}^n} \sigma(k,\theta)e^{-2\pi\frac{i}{\hbar}m\cdot \theta}d\theta.
\end{equation}
\noindent On the other hand, there exists extensive research in eigenvalues and invertibility of infinite matrices \cite{shivakumar1987eigenvalues,shivakumar2009review,koehler1957estimates,boffi2010finite}, for this reason, we are motivated to establish properties in terms of the associated matrix of a pseudo-differential operator.\\
\indent The main objective of the present paper is to establish criteria for boundedness and $r-nuclearity$ properties in terms of the associated matrix of the pseudo-differential operator. The following proposition is a compilation of the results in \cite{molahajloo2010pseudo,delgado2013p} and will help us elucidate our main theorem.
\begin{proposition}\label{Base theorem}
    Let $\sigma$ be a measurable function such that $|(\mathcal{F}_{\mathbb{T}} \sigma)(k,m)|\leq C(k)\omega(m)$. The following conditions are sufficient to guarantee that operator $T_\sigma:L^p(\mathbb{Z})\to L^p(\mathbb{Z}) $ with $1\leq p<\infty$ belong to different ideals of operators as correspondingly specified: \begin{enumerate}[(i).]
        \item If $C(\cdot)\in L^\infty(\mathbb{Z})$ and $\omega\in L^1(\mathbb{Z}) $, then $T_\sigma$ is bounded \cite[Theorem 3.2]{molahajloo2010pseudo}.\label{Base theorem i}
        \item If $C(n)\to 0 $ with $|n|\to\infty$ and $\omega\in L^1(\mathbb{Z}) $, then $T_\sigma$ is compact \cite[Theorem 3.3]{molahajloo2010pseudo}.\label{Base theorem ii}
        \item If $C(\cdot)\in L^1(\mathbb{Z})$ and $\omega\in L^p(\mathbb{Z}) $, then $T_\sigma$ is 1-nuclear \cite[Theorem 2.3]{delgado2013p}.\label{Base theorem iii}
    \end{enumerate}
\end{proposition}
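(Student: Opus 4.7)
The plan is to reduce all three parts to a single kernel estimate coming from \eqref{matrix operator}. Setting $K(k,m)=(\mathcal{F}_{\mathbb{T}}\sigma)(k,k-m)$, the hypothesis $|(\mathcal{F}_{\mathbb{T}}\sigma)(k,n)|\leq C(k)\omega(n)$ gives the pointwise bound $|K(k,m)|\leq C(k)\omega(k-m)$, and consequently
\[
|(T_\sigma a)(k)|\leq C(k)\,(\omega\ast|a|)(k),
\]
where $\ast$ denotes convolution on $\mathbb{Z}$. Each of (i)--(iii) would then be extracted from this master inequality by pairing it with the appropriate summability of $C$ and $\omega$.

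For (i), I would apply Young's convolution inequality to get $\|\omega\ast|a|\|_p\leq\|\omega\|_1\|a\|_p$, and then multiply pointwise by $C\in L^\infty$ to obtain $\|T_\sigma\|_{L^p\to L^p}\leq\|C\|_\infty\|\omega\|_1$. For (ii), I would truncate the symbol in its first variable: setting $\sigma_N(k,\theta)=\mathbf{1}_{|k|\leq N}(k)\,\sigma(k,\theta)$, the operator $T_{\sigma_N}$ has range contained in the $(2N{+}1)$-dimensional subspace of sequences supported on $\{|k|\leq N\}$, hence has finite rank. Applying (i) to the tail $\sigma-\sigma_N$ gives $\|T_\sigma-T_{\sigma_N}\|_{\mathrm{op}}\leq\|\omega\|_1\sup_{|k|>N}C(k)\to 0$ by the decay hypothesis on $C$, so $T_\sigma$ is a norm limit of finite-rank operators, hence compact.

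For (iii), the plan is to represent $T_\sigma$ as a norm-convergent sum of rank-one operators by decomposing the kernel row-by-row: $T_\sigma=\sum_{k_0\in\mathbb{Z}}K(k_0,\cdot)\otimes\delta_{k_0}$, with $K(k_0,\cdot)\in(L^p)^*$ and $\delta_{k_0}\in L^p$. Translation invariance of the inner norm yields $\|K(k_0,\cdot)\|_{(L^p)^*}\leq C(k_0)\|\omega\|_{(L^p)^*}$ and $\|\delta_{k_0}\|_p=1$, so the nuclear norm is controlled by $\|C\|_1\|\omega\|_{(L^p)^*}$. The main obstacle here is that this natural decomposition asks for $\omega\in L^{p'}$, whereas the hypothesis is $\omega\in L^p$: for $p\leq 2$ the embedding $\ell^p\hookrightarrow\ell^{p'}$ resolves this automatically, while for $p>2$ the decomposition must be refined, for instance by a column-wise dual expansion combined with a Minkowski-type exchange, or by expanding $\sigma(k,\theta)$ in a Fourier series in $\theta$ and regrouping the rank-one contributions, as in Delgado's original argument.
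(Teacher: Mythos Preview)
The paper does not supply a proof of this proposition: it is stated as a compilation of results from \cite{molahajloo2010pseudo} and \cite{delgado2013p}, with each item tagged by the corresponding external theorem number, and the text immediately moves on to use it. So there is nothing in the paper to compare your argument against line by line.

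That said, your treatment of (i) and (ii) is correct and is exactly the standard route: the pointwise domination $|(T_\sigma a)(k)|\le C(k)(\omega*|a|)(k)$ followed by Young's inequality gives the operator bound $\|C\|_\infty\|\omega\|_1$, and the finite-rank truncation in $k$ combined with the same estimate on the tail yields compactness. These are the arguments one finds in \cite{molahajloo2010pseudo}.

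For (iii) there is a genuine gap that you yourself flag but do not close. The row-by-row decomposition $T_\sigma=\sum_{k_0}\delta_{k_0}\otimes K(k_0,\cdot)$ forces $K(k_0,\cdot)\in\ell^{p'}$, hence $\omega\in\ell^{p'}$, and for $p>2$ this does \emph{not} follow from $\omega\in\ell^{p}$. Your proposed fixes do not work as stated: the column-wise expansion leads to $\sum_{m_0}\|C(\cdot)\omega(\cdot-m_0)\|_p$, and the Minkowski interchange between $\ell^1_{m_0}$ and $\ell^p_k$ goes the wrong way (it gives a lower bound $\|C\|_p\|\omega\|_1$, not an upper bound); and ``expanding $\sigma$ in a Fourier series in $\theta$'' simply reproduces the kernel representation you started from. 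In fact the discrepancy is structural: taking $C=\delta_0$ and $\omega\in\ell^p\setminus\ell^{p'}$ (possible when $p>2$) produces a rank-one operator whose defining functional is unbounded on $\ell^p$, so the natural reading of the hypothesis really requires $\omega\in\ell^{p'}$. Either the stated $L^p$ is a typo for $L^{p'}$, or the cited Theorem~2.3 in \cite{delgado2013p} carries an additional assumption you have not reproduced; in any case, the paper's own $r$-nuclearity theorem in Section~\ref{section nuclear} uses precisely your column decomposition $A=\sum_k (Ae_k)\otimes e_k$ and places the $\ell^{p_2}$ condition on the \emph{output} variable of the kernel, which is consistent with the $\ell^{p'}$ requirement on $\omega$ once the indices are matched up. You should resolve this before presenting (iii) as proved.
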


\indent We can see that the asymptotic behavior of the function $C(\cdot)$ implies some spectral properties of the operator, meanwhile $\omega(\cdot)$ implies the domain where these properties are valid. \\
If we consider the symbol classes $S^\mu_{\rho,\delta}(\hbar\mathbb{Z}^n\times\mathbb{T}^n)$ with $\rho,\delta\geq 0$ of functions $\sigma$ that satisfies
\[\sigma(k,\cdot)\in C^\infty(\mathbb{T}^n) \ \text{for all } k\in\hbar\mathbb{Z},\]
and for all multi-indices $\alpha,\beta$ there exists a positive constant $C_{\alpha,\beta}$ such that
\begin{equation}\label{Symbol class def}
|D_\theta^{(\beta)}\Delta_k^\alpha \sigma(k,\theta)|\leq C_{\alpha,\beta} (1+|k|)^{\mu -\rho\alpha+\delta\beta }, 
\end{equation}
for all $k\in\hbar\mathbb{Z}^n$ and $\theta\in\mathbb{T}^n$. Then it is possible to have an inequality as in the hypothesis of the Proposition \ref{Base theorem} 
\[|(\mathcal{F}_{\mathbb{T}^n}\sigma)(k,m)|\leq C_Q (1+|k|)^{\mu+2\tilde{Q}\delta} \left(1+\frac{1}{\hbar}|m|\right)^{-2\tilde{Q}}, \]
for all $k,m\in\hbar\mathbb{Z}^n$ and $\tilde{Q}\geq 0$. This inequality and Proposition \ref{Base theorem} allow us to obtain sufficient conditions on the order of the symbol to guarantee the boundedness, compactness and $r-$nuclearity of the pseudo-differential operator.\\
\indent The paper is organized as follows. In Section \ref{section matrix analysis} we use some results from infinite matrix theory to conclude simple order conditions to ensure $(p,q)-$boundedness. These conditions are valid for every $0\leq \rho,\delta $ without further restrictions.\\
\indent In Section \ref{section nuclear} we generalize the results on \cite{delgado2015r} to ensure the $r-$nuclearity with the order of the symbol. Our proofs are not based on pseudo-differential calculus allowing to covert exotic cases like $\rho=\delta=0$ or $\rho=\delta=1$.\\
\indent For some classes of symbols we can prove that their associated matrix is almost diagonal, in Section \ref{section approximation} we exploit this property to obtain error estimates of diagonal approximations.\\
\indent Finally, in Section \ref{section applications} we obtain growth estimates for the eigenvalues of the discrete Schrödinger operator and for a general class of elliptic operators. We find that the eigenvalues for discrete anharmonic operators growth faster than in the real case.

\section{Matrix analysis}\label{section matrix analysis}
\indent In this section we will review some of the matrix analysis presented in \cite{maddox1988elements,maddox2006infinite}, specially the boundedness criteria in terms of the elements of the matrix associated to the operator \cite{maddox1988elements}. Every linear operator in $\mathcal{B}(\ell^p)$ with $1\leq p< \infty$ has matrix representation.\\
\indent For the construction of the matrix associated to the operator $T_\sigma$, consider the standard basis $\{e_k\}_{k\in\hbar\mathbb{Z}^n}\subset \ell^p(\hbar\mathbb{Z}^n)$ of elements such that $e_k(x)=1$ if $x=k$ and 0 otherwise. Let $a=\sum_{m\in\hbar\mathbb{Z}^n} a_m e_m $, assuming that $T_\sigma\in\mathcal{B}(\ell^p(\hbar\mathbb{Z}^n))$ we have 
\begin{equation}\label{Matrix rep}
(T_\sigma a)(k)=\sum_{m\in\hbar\mathbb{Z}^n} a_m (T_\sigma e_m)^{(k)},
\end{equation}
where $(T_\sigma e_m)^{(k)}$ is the k-th component of $T_\sigma e_m$, then the associated matrix is $A(k,m)=(T_\sigma e_m)^{(k)}$. The representation (\ref{Matrix rep}) is often called \textit{kernel representation}.
\begin{remark}
    Even if we do not know if the operator is bounded we have the matrix (\ref{Matrix rep}), the existence of the matrix representation is a necessary condition for the operator to be bounded on any $\mathcal{B}(\ell^p)$, but is not sufficient. However, if we find conditions that ensure the boundedness of the operator induced by the matrix, then the operator $T_\sigma$ will be bounded on the dense set $\{e_k\}_{k\in\hbar\mathbb{Z}^n}$,   which implies that $T_\sigma$ is bounded. That is, we can study boundedness conditions for the matrix operator  and these will work on $T_\sigma$ as well.
\end{remark}  

\begin{proposition}
    Every bounded operator $T:\ell^{2}(\hbar\mathbb{Z}^n)\to\ell^{2}(\hbar\mathbb{Z}^n)$ is a pseudo-differential operator with symbol
    \[\sigma(k,\theta)= \sum_{m\in\hbar\mathbb{Z}^n} K(k,m)e^{-2\pi\frac{i}{\hbar}m\theta}, \]
    where $K(\cdot,\cdot)$ is the associated matrix of $T$. 
\end{proposition}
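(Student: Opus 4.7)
The plan is to recognize the formula for $\sigma$ as a Fourier synthesis of the rows of the associated matrix of $T$, and then invert it back via $L^2$-Plancherel on the torus $\mathbb{T}^n$. Concretely, I would first verify that $\sigma(k,\cdot)$ is a well-defined element of $L^2(\mathbb{T}^n)$ for each fixed $k\in\hbar\mathbb{Z}^n$, then check the identity $T_\sigma=T$ on the standard basis $\{e_j\}$, and finally extend by density.

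For the well-definedness of $\sigma$, since $T$ is bounded on $\ell^2(\hbar\mathbb{Z}^n)$ so is $T^*$, and $(T^*e_k)^{(m)}=\overline{K(k,m)}$. Hence
\[\sum_{m\in\hbar\mathbb{Z}^n}|K(k,m)|^2 \;=\; \|T^*e_k\|_{\ell^2}^2 \;\leq\; \|T\|^2.\]
Thus, for every $k$, the series defining $\sigma(k,\theta)$ has square-summable Fourier coefficients in $m$ and converges in $L^2(\mathbb{T}^n)$, with $\|\sigma(k,\cdot)\|_{L^2}\leq\|T\|$ uniformly in $k$.

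To verify $T_\sigma e_j = Te_j$, I would substitute $\widehat{e_j}(\theta)=e^{-2\pi ij\theta/\hbar}$ and the series for $\sigma$ into the defining integral of $T_\sigma$; the orthogonality relations on $\mathbb{T}^n$, namely $\int_{\mathbb{T}^n}e^{2\pi ip\theta/\hbar}\,d\theta=\delta_{p,0}$ for $p\in\hbar\mathbb{Z}^n$, then collapse the integral over the double sum to the single matrix entry $K(k,j)=(Te_j)(k)$, once the sign conventions of $\mathcal{F}_{\hbar\mathbb{Z}^n}$, $\mathcal{F}_{\mathbb{T}^n}$ and the factor $e^{2\pi ik\theta/\hbar}$ in the definition of $T_\sigma$ are reconciled. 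Finally, linearity and density of the finite linear combinations of the $e_j$ in $\ell^2(\hbar\mathbb{Z}^n)$, combined with the uniform $L^2$-bound on $\sigma(k,\cdot)$, transfer the identity from the basis to all of $\ell^2$.

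The subtle point is justifying the interchange between the infinite series defining $\sigma(k,\cdot)$ and the torus integral in $(T_\sigma e_j)(k)$. This is legitimate because $e_j$ has finite support, so $\widehat{e_j}$ is a single exponential, and the pairing becomes the $L^2$ inner product of $\sigma(k,\cdot)$ against one trigonometric character, which is exactly the definition of a Fourier coefficient. The secondary obstacle is the bookkeeping of signs and indices mentioned above; once that is carried out carefully, the entire argument reduces to $L^2$-Plancherel on $\mathbb{T}^n$ applied one $k$ at a time.
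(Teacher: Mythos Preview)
Your proposal is correct and follows essentially the same route as the paper: define $\sigma(k,\cdot)$ as the Fourier synthesis of the $k$-th row of the matrix, invoke Plancherel on $\mathbb{T}^n$ for convergence, and then check $T_\sigma=T$ on the basis vectors. Your argument is in fact more careful than the paper's, which asserts that ``the columns of the matrix belong to $\ell^2$'' and then immediately uses that $K(k,\cdot)\in\ell^2$ (a statement about rows); your use of $T^*$ to obtain $\sum_m|K(k,m)|^2=\|T^*e_k\|^2\le\|T\|^2$ is precisely what is needed there, and your explicit identification of the interchange step with taking a single Fourier coefficient fills in what the paper leaves as ``it is easy to see.''
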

\begin{proof}
    As we mentioned earlier, we can define an associated matrix $K(k,m)=(T_\sigma e_m)^{(k)}$. The columns of the matrix belong to $\ell^{2}(\hbar\mathbb{Z}^n)$, in other words, if we fix $k$ we obtain the function $K(k,\cdot)\in \ell^{2}(\hbar\mathbb{Z}^n) $ for every $k\in\hbar\mathbb{Z}^n$. We define  
     \[\sigma(k,\theta)= \sum_{m\in\hbar\mathbb{Z}^n} K(k,m)e^{-2\pi\frac{i}{\hbar}m\theta},\]
     which converges by the Plancherel formula. Finally, is easy to see that $T_\sigma=T$.
\end{proof}

The following proposition presents boundedness conditions for matrix operators based in the summability of its columns and rows.

\begin{proposition}\label{general bounded criterion}
    Let $A$ be the operator defined by the action of the matrix $(a_{k,m})_{k,m\in\hbar\mathbb{Z}^n}$, we will omit writing the set $\hbar\mathbb{Z}^n$. Let $\frac{1}{p}+\frac{1}{q}=1$, then the following conditions holds
    \begin{enumerate}[(i)]
        \item $A\in\mathcal{B}(\ell^1,\ell^p)$ with $p\in[1,\infty)$ if and only if $\sup_{m}\sum_{k}|a_{k,m}|^p <\infty$ \cite[Theorem 5]{maddox1988elements}.\label{Matrix theorem i}
        \item $A\in\mathcal{B}(\ell^1,\ell^\infty)$ if and only if $\sup_{k,m} |a_{k,m}| <\infty$ \cite[Theorem 5]{maddox1988elements}.\label{Matrix theorem ii}
        \item If $\sum_k\left(\sum_m |a_{k,m}|^q\right)^{\frac{p}{q}}<\infty$ with $1<p<\infty$, then $A\in\mathcal{B}(\ell^p)$ \cite{borwein1979matrix}.\label{Matrix theorem iv} 
    \end{enumerate}
\end{proposition}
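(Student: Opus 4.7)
The plan is to establish each of the three parts by combining a simple testing argument on the standard basis vectors $e_m$ (for necessity) with a direct summation inequality (for sufficiency). Since the proposition is about boundedness of a matrix action, both directions should reduce to manipulations with $\ell^p$ norms, with Minkowski's inequality handling the mapping from $\ell^1$, and Hölder's inequality handling the case $1<p<\infty$.

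For (\ref{Matrix theorem i}), I would first note that $Ae_m$ is literally the $m$-th column of the matrix, so $\|Ae_m\|_p = (\sum_k |a_{k,m}|^p)^{1/p}$. Since $\|e_m\|_1 = 1$, if $A \in \mathcal{B}(\ell^1,\ell^p)$ we immediately get $\sup_m (\sum_k |a_{k,m}|^p)^{1/p} \leq \|A\|$, giving necessity. Conversely, given $a = \sum_m a_m e_m \in \ell^1$, I would apply the triangle inequality in $\ell^p$ (which for a countable sum is Minkowski):
\[
\|Aa\|_p \leq \sum_m |a_m|\, \|Ae_m\|_p \leq \bigl(\sup_m \|Ae_m\|_p\bigr) \|a\|_1.
\]
Absolute convergence is justified because $a \in \ell^1$ and the column norms are uniformly bounded, so the interchange is legitimate. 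Part (\ref{Matrix theorem ii}) follows by the same strategy, replacing the $\ell^p$-norm of the column by its $\ell^\infty$-norm, and noting that $\sup_m \|Ae_m\|_\infty = \sup_m \sup_k |a_{k,m}| = \sup_{k,m} |a_{k,m}|$.

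For (\ref{Matrix theorem iv}), the key idea is to control $(Aa)(k)$ pointwise by Hölder's inequality in the variable $m$: for $a \in \ell^p$,
\[
|(Aa)(k)| = \Bigl|\sum_m a_{k,m} a_m \Bigr| \leq \Bigl(\sum_m |a_{k,m}|^q\Bigr)^{1/q} \|a\|_p.
\]
Raising to the $p$-th power and summing over $k$ then yields
\[
\|Aa\|_p^p \leq \|a\|_p^p \sum_k \Bigl(\sum_m |a_{k,m}|^q\Bigr)^{p/q},
\]
so the hypothesized finiteness of the double sum gives $A \in \mathcal{B}(\ell^p)$ with an explicit norm bound. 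The only subtle point to verify is the absolute convergence of $\sum_m a_{k,m} a_m$ for each $k$, which is furnished by Hölder applied to the hypothesis; this is routine but I would flag it explicitly.

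I do not expect a genuine obstacle here: these are classical matrix boundedness criteria, and the main care is in handling interchanges of summations on $\hbar\mathbb{Z}^n$. Because the index set is countable and all estimates are derived from absolute convergence, no further regularity or measure-theoretic argument is needed. The one place where a reader might want more detail is the Minkowski step in (\ref{Matrix theorem i}), where I would briefly justify that $\sum_m |a_m|\, \|Ae_m\|_p$ dominates $\bigl\|\sum_m a_m Ae_m\bigr\|_p$ by monotone convergence applied to the partial sums.
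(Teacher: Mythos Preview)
Your proof is correct and self-contained. Note, however, that the paper does not actually prove this proposition: it is stated as a compilation of classical results, with each item carrying a citation (to Maddox for (\ref{Matrix theorem i}) and (\ref{Matrix theorem ii}), and to Borwein for (\ref{Matrix theorem iv})) and no accompanying argument. Your testing-on-basis-vectors plus Minkowski approach for (\ref{Matrix theorem i})--(\ref{Matrix theorem ii}) and the pointwise H\"older estimate for (\ref{Matrix theorem iv}) are exactly the standard proofs one finds in those references, so in effect you have reproduced the cited arguments rather than diverged from the paper's (nonexistent) proof. The only minor remark is that in (\ref{Matrix theorem i}) the Minkowski step for countable sums is cleanest stated via the partial sums and Fatou/monotone convergence, which you already flagged; nothing further is needed.
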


 If we return to the Proposition \ref{Base theorem} that motivates the present paper, our first theorem is a consequence of the previous matrix conditions and the existence of an estimative in the Fourier transform of $\sigma$ as
 \begin{equation}\label{fourier inequality}
     |(\mathcal{F}_{\mathbb{T}} \sigma)(k,m)|\leq C(k)\omega(m),
 \end{equation}
for every $k,m\in\hbar\mathbb{Z}^n$.

\begin{theorem}\label{Theorem bounded}
    Let $\sigma$ be a symbol satisfying (\ref{fourier inequality}) and p,q being conjugates. Then the following implications holds:
    \begin{enumerate}[(i)]
        \item $T_\sigma\in\mathcal{B}(\ell^1,\ell^p)$ with $1\leq p<\infty$ if $C(\cdot)\in\ell^p(\hbar\mathbb{Z}^n)$ and $\omega(\cdot)\in\ell^\infty(\hbar\mathbb{Z}^n)$.\label{bounded i}
        \item $T_\sigma\in\mathcal{B}(\ell^1,\ell^\infty)$ if $C(\cdot),\omega(\cdot)\in\ell^\infty(\hbar\mathbb{Z}^n)$.\label{bounded ii}
        \item $T_\sigma\in\mathcal{B}(\ell^p)$ with $1< p$ if $C(\cdot)\in\ell^p(\hbar\mathbb{Z}^n)$ and $\omega(\cdot)\in\ell^q(\hbar\mathbb{Z}^n)$.\label{bounded iv}
     \end{enumerate}
\end{theorem}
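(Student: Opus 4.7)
The plan is to feed the pointwise bound on the matrix coefficients into Proposition \ref{general bounded criterion}. By the representation (\ref{matrix operator}), the matrix associated with $T_\sigma$ is $a_{k,m}=(\mathcal{F}_{\mathbb{T}^n}\sigma)(k,k-m)$, and the hypothesis (\ref{fourier inequality}) translates immediately into the envelope
\[
|a_{k,m}|\leq C(k)\,\omega(k-m),\qquad k,m\in\hbar\mathbb{Z}^n.
\]
Each of (i), (ii), (iii) is then obtained by verifying the corresponding summability condition in Proposition \ref{general bounded criterion} for this envelope.

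For (i), I would pull the $\ell^\infty$ norm of $\omega$ out and compute
\[
\sup_m \sum_k |a_{k,m}|^p \leq \|\omega\|_\infty^p \sum_k C(k)^p = \|\omega\|_\infty^p\,\|C\|_p^p < \infty,
\]
so condition (\ref{Matrix theorem i}) applies. Part (ii) is even more direct: $\sup_{k,m}|a_{k,m}|\leq \|C\|_\infty\|\omega\|_\infty$, which is condition (\ref{Matrix theorem ii}).

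For (iii), the main observation is the translation invariance of the counting measure on $\hbar\mathbb{Z}^n$, which gives $\sum_m \omega(k-m)^q = \|\omega\|_q^q$ independently of $k$. Then
\[
\sum_k \left(\sum_m |a_{k,m}|^q\right)^{p/q}
\leq \sum_k \left(C(k)^q\,\|\omega\|_q^q\right)^{p/q}
= \|\omega\|_q^p\,\|C\|_p^p < \infty,
\]
and condition (\ref{Matrix theorem iv}) yields $T_\sigma\in\mathcal{B}(\ell^p)$.

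There is no genuine obstacle here; the theorem is really a dictionary between the envelope (\ref{fourier inequality}) on the Fourier side and the matrix estimates of Proposition \ref{general bounded criterion}. The only point that deserves a brief mention in the write-up is that $\omega$ appears composed with the shift $k\mapsto k-m$ rather than evaluated at $m$, but this is harmless because summing over $m\in\hbar\mathbb{Z}^n$ is the same as summing over $k-m\in\hbar\mathbb{Z}^n$, so every $\ell^r$ norm of $\omega(k-\cdot)$ coincides with the corresponding $\ell^r$ norm of $\omega$.
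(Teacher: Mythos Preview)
Your proof is correct and follows exactly the paper's approach: identify the matrix entries $a_{k,m}=(\mathcal{F}_{\mathbb{T}^n}\sigma)(k,k-m)$, bound them by $C(k)\omega(k-m)$, and feed this envelope into the three items of Proposition~\ref{general bounded criterion}. The paper only writes out case~(\ref{bounded i}) and declares the others similar, so your explicit treatment of (\ref{bounded ii}) and especially the translation-invariance observation for (\ref{bounded iv}) is in fact more complete than the original.
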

\begin{proof}
    Let $a_{k,m}=(\mathcal{F}_{\mathbb{T}^n}\sigma)(k,k-m)$ where the inequality (\ref{fourier inequality}). We will only prove \ref{bounded i}) of the proposition, the other cases are similar. Using the first implication of the previous proposition we obtain
    \begin{align*}
    \sup_{m\in\hbar\mathbb{Z}^n} \sum_{k\in\hbar\mathbb{Z}^n} |a_{k,m}|^p &\leq \sup_{m\in\hbar\mathbb{Z}^n} \|\omega(\cdot)\|_{\ell^\infty(\hbar\mathbb{Z}^n)}^p \sum_{k\in\hbar\mathbb{Z}^n} |C(k)|^p \\
    &=\|C(\cdot) \|_{\ell^p(\hbar\mathbb{Z}^n)} \|\omega(\cdot)\|_{\ell^\infty(\hbar\mathbb{Z}^n)}^p.    
    \end{align*}
\end{proof}

\begin{remark}\label{remark theorem bounded}
      In comparison with previous results, the Theorem \ref{Theorem bounded} presents weaker conditions to ensure boundedness, this is due to the relaxation in the summability of $\omega(\cdot)$. Clearly if $\omega(\cdot)\in\ell^1(\hbar\mathbb{Z}^n)$  the weaker condition for $C(\cdot)$ is the presented in \cite{molahajloo2010pseudo}, where $\omega(\cdot)\in\ell^\infty(\hbar\mathbb{Z}^n)$ implies $p-$boundedness.\\

\end{remark}
\begin{remark}
    In \cite{carlos2011p} is proved that if $a_{k,m}\in \ell^q(\hbar\mathbb{Z}^n\times\hbar\mathbb{Z}^n) $ with $a_{k,m}$ defined as in the previous proof, then  $T_\sigma\in\mathcal{B}(\ell^p)$ with $2< p<\infty$. The Theorem \ref{Theorem bounded}.\ref{bounded iv} is an improvement because $p>q$ when $p>2$, this means that $C(\cdot)\in\ell^p(\hbar\mathbb{Z}^n)$ is a weaker condition that the one $C(\cdot)\in\ell^q(\hbar\mathbb{Z}^n)$ obtained if we had used the result in \cite[Corollary 3.1]{carlos2011p}.
\end{remark}
The existence of an inequality like (\ref{fourier inequality}) may seem a bit arbitrary, but in the context of symbol classes $S^\mu_{\rho,\delta}(\hbar\mathbb{Z}^n\times\mathbb{T}^n)$ this is natural. Moreover, we do not need either infinite derivatives or differences to have an equivalent inequality, just a finite number of derivatives on the toroidal variable as it is show in the following  lemma.
\begin{lemma}\label{fourier bound}
    Let us assume that $\sigma$ satisfies the symbol inequality 
\begin{equation*}
|D_\theta^{(\beta)}\sigma(k,\theta)|\leq C_{\beta} (1+|k|)^{\mu+\delta\beta }, 
\end{equation*}
for $|\beta|\leq Q\in\mathbb{N}$, then 
\[|(\mathcal{F}_{\mathbb{T}^n}\sigma)(k,m)|\leq C_Q (1+|k|)^{\mu+2\tilde{Q}\delta} \left(1+\frac{1}{\hbar}|m|\right)^{-2\tilde{Q}}. \]
for all $k,m\in\hbar\mathbb{Z}^n$ and $\tilde{Q}\leq Q$.
\end{lemma}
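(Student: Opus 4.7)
The plan is to exploit the standard duality between the Fourier frequency $m$ and derivatives on the torus, implemented through integration by parts. The pivotal identity is
\begin{equation*}
(1-\Delta_\theta)^{\tilde{Q}}\, e^{-2\pi i m\theta/\hbar} \;=\; \Bigl(1 + \tfrac{4\pi^2|m|^2}{\hbar^2}\Bigr)^{\tilde{Q}} e^{-2\pi i m\theta/\hbar},
\end{equation*}
which allows one to manufacture algebraic decay in $|m|/\hbar$ at the cost of transferring $2\tilde{Q}$ derivatives of $\sigma$ in the $\theta$ variable.

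First I would dispose of the trivial case $m = 0$: the definition \eqref{Fourier sigma} together with the hypothesis at $\beta = 0$ gives $|(\mathcal{F}_{\mathbb{T}^n}\sigma)(k,0)| \leq C_0 (1+|k|)^\mu$, which is the claimed inequality because the factor $(1 + |m|/\hbar)^{-2\tilde{Q}}$ equals $1$ there. For $m \neq 0$, I would substitute the identity above into \eqref{Fourier sigma}, divide by the algebraic factor, and integrate by parts $2\tilde{Q}$ times on $\mathbb{T}^n$; since the torus is boundaryless no boundary contributions appear, and one obtains
\begin{equation*}
(\mathcal{F}_{\mathbb{T}^n}\sigma)(k,m) \;=\; \Bigl(1 + \tfrac{4\pi^2|m|^2}{\hbar^2}\Bigr)^{-\tilde{Q}}\int_{\mathbb{T}^n}(1-\Delta_\theta)^{\tilde{Q}}\sigma(k,\theta)\,e^{-2\pi i m\theta/\hbar}\, d\theta.
\end{equation*}

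Next, expanding $(1-\Delta_\theta)^{\tilde{Q}}$ by the multinomial theorem produces a finite linear combination of derivatives $D_\theta^{(\beta)}\sigma(k,\theta)$ of order $|\beta|\leq 2\tilde{Q}$. The symbol hypothesis then bounds each term by a constant times $(1+|k|)^{\mu + 2\tilde{Q}\delta}$, provided the derivatives are within the admissible range (this is where the constraint between $\tilde{Q}$ and $Q$ is used; the argument needs $2\tilde{Q} \leq Q$, so the stated $\tilde{Q} \leq Q$ is in fact a mild rescaling of the natural order count). Taking absolute values under the integral, using the elementary equivalence $1+|x|^2 \asymp (1+|x|)^2$ to rewrite $(1+4\pi^2|m|^2/\hbar^2)^{-\tilde{Q}}$ as a constant multiple of $(1+|m|/\hbar)^{-2\tilde{Q}}$, and absorbing all combinatorial and equivalence constants into a single $C_Q$ yields the desired inequality.

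I do not anticipate a genuine obstacle: the calculation is the classical ``gain derivatives, gain decay'' integration by parts on the torus. The main care is bookkeeping, namely tracking which derivatives of $\sigma$ appear after the expansion of $(1-\Delta_\theta)^{\tilde{Q}}$, making sure the hypothesis applies to all of them, and absorbing the volume of $\mathbb{T}^n$ and the multinomial coefficients into the final constant.
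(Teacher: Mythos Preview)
Your proposal is correct and follows essentially the same route as the paper: the paper also treats $m=0$ separately, then iterates the identity $(1+\mathcal{L}_\theta)e^{2\pi i m\theta/\hbar}=(1+4\pi^2|m|^2/\hbar^2)e^{2\pi i m\theta/\hbar}$ (with $\mathcal{L}_\theta$ the toroidal Laplacian, i.e.\ your $-\Delta_\theta$) $\tilde{Q}$ times and integrates by parts to shift the derivatives onto $\sigma$. Your remark that the argument really needs $2\tilde{Q}\leq Q$ rather than $\tilde{Q}\leq Q$ is well taken; the paper's proof has the same implicit requirement.
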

\begin{proof}
    The case $(\mathcal{F}_{\mathbb{T}^n}\sigma)(k,0)$ is clear. Let us assume that $m\neq 0$. Then, we have
    \[e^{2\pi \frac{i}{\hbar} m\cdot \theta}= \frac{(1+\mathcal{L}_\theta)}{1+\frac{4\pi^2}{\hbar}|m|^2}e^{2\pi \frac{i}{\hbar} m\cdot \theta},  \]
    where $\mathcal{L}_\theta=\sum^{n}_{j=1}\frac{\partial^2}{\partial \theta_j^2}$ \cite{botchway2020difference}. Using the periodicity of the exponential function and the previous equation we have
    \begin{align*}
        (\mathcal{F}_{\mathbb{T}^n}\sigma)(k,m)&=\int_{\mathbb{T}^n} \sigma(k,\theta)e^{-2\pi\frac{i}{\hbar}m\cdot \theta}d\theta,\\
        &= \int_{\mathbb{T}^n}  \frac{(1+\mathcal{L}_\theta)^{\tilde{Q}} }{(1+\frac{4\pi^2}{\hbar}|m|^2)^{\tilde{Q}}}   \\
        &= \left(1+\frac{4\pi^2}{\hbar}|m|^2\right)^{-\tilde{Q}} \int_{\mathbb{T}^n} e^{2\pi \frac{i}{\hbar} m\cdot \theta} (1+\mathcal{L}_\theta)^{\tilde{Q}}\sigma(k,\theta)d\theta, 
    \end{align*}
    for all $\tilde{Q}\leq Q$. Finally, we have
    \[ |(\mathcal{F}_{\mathbb{T}^n}\sigma)(k,m)|\leq C_Q (1+|k|)^{\mu+2\tilde{Q}\delta} \left(1+\frac{1}{\hbar}|m|\right)^{-2\tilde{Q}}  \]
\end{proof}
In the case of $\sigma\in S^\mu_{\rho,\delta}(\hbar\mathbb{Z}^n\times\mathbb{T}^n)$ the Lemma \ref{fourier bound} is valid for all $\tilde{Q}\in\mathbb{N}$. Now, if $\sigma$ satisfies the inequality of the Lemma \ref{fourier inequality} or if $\sigma\in S^\mu_{\rho,\delta}(\hbar\mathbb{Z}^n\times\mathbb{T}^n)$ we have 
\begin{equation}\label{C()}
    C(k)= C_Q\left( 1+|k| \right)^{\mu+2\tilde{Q}\delta},
\end{equation} 
\begin{equation}\label{w()}
    \omega(m)= \left( 1+\frac{1}{\hbar}|m| \right)^{-2\tilde{Q}},
\end{equation} for every $Q\in\mathbb{N}_0$. This fact allows us to translate the summability conditions in $C(\cdot)$ and $\omega(\cdot)$ into simpler conditions on $\mu,\rho$ and $\delta$.

\begin{corollary}\label{Theorem bounded symbols}
    Assume that $\sigma$ satisfy the following symbol inequality:
    \[|D_\theta^{(\beta)}\sigma(k,\theta)|\leq C_{\beta} (1+|k|)^{\mu+\delta\beta }, \]
    for all $|\beta|\leq \lfloor \frac{n}{2} \rfloor +1$. Let
     p,q be conjugates, then the following implications hold:
    \begin{enumerate}[(i)]
        \item $T_\sigma\in\mathcal{B}(\ell^1,\ell^p)$ with $1\leq p<\infty$ if $\mu<-\frac{n}{p}$.\label{bounded symbol i}
        \item $T_\sigma\in\mathcal{B}(\ell^1,\ell^\infty)$ if $\mu\leq 0$, and this condition is sharp.\label{bounded symbol ii}
        \item $T_\sigma\in\mathcal{B}(\ell^p)$  for every $1\leq p$ if $\mu\leq-(n+2)\delta$, and this condition is sharp for $\delta=0$ \label{bounded symbol iii} 
     \end{enumerate}

\end{corollary}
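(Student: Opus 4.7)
The plan is to apply Lemma \ref{fourier bound} to convert each symbol hypothesis into a matrix-entry estimate of the form $|a_{k,m}| \leq C(k)\omega(k-m)$ with
\[ C(k) = C_Q(1+|k|)^{\mu+2\tilde{Q}\delta}, \qquad \omega(m) = \left(1+\frac{|m|}{\hbar}\right)^{-2\tilde{Q}}, \]
and then invoke Theorem \ref{Theorem bounded} after tuning $\tilde{Q}$ to the item. The summability of $C$ and $\omega$ is read off from the standard $p$-series comparison on $\hbar\mathbb{Z}^n$, namely $(1+|k|)^{s}\in\ell^p(\hbar\mathbb{Z}^n)$ iff $sp<-n$, with $s\leq 0$ characterizing membership in $\ell^\infty$.

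For (i) and (ii) take $\tilde{Q}=0$; then $\omega\equiv 1$ is in $\ell^\infty$, the condition $C\in\ell^p$ becomes $\mu<-n/p$ and $C\in\ell^\infty$ becomes $\mu\leq 0$, and Theorem \ref{Theorem bounded}(i) respectively (ii) concludes. Sharpness in (ii), and later in (iii) for $\delta=0$, is established by the $\theta$-independent symbol $\sigma(k,\theta)=(1+|k|)^\mu$: it satisfies the hypothesis with $\delta=0$, and the corresponding operator is simply multiplication $a(k)\mapsto (1+|k|)^\mu a(k)$, which fails to be bounded as $\ell^1\to\ell^\infty$ (or as $\ell^p\to\ell^p$) as soon as $\mu>0$.

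The main point is (iii). Here I would choose $\tilde{Q}=\lfloor n/2\rfloor+1$, the smallest integer for which $2\tilde{Q}>n$ and hence $\omega\in\ell^1$. Because then $2\tilde{Q}\leq n+2$, the hypothesis $\mu\leq-(n+2)\delta$ ensures $\mu+2\tilde{Q}\delta\leq 0$, so $C\in\ell^\infty$. A direct appeal to Theorem \ref{Theorem bounded}(iii) is too weak at this point, since it requires the strict inequality $C\in\ell^p$ with a $p$-dependent $\tilde{Q}$ and does not cover $p=1$; instead the key step is to exploit the convolution structure directly:
\[ |T_\sigma a(k)| \leq \|C\|_\infty\sum_{m\in\hbar\mathbb{Z}^n}\omega(k-m)|a(m)|=\|C\|_\infty(\omega\ast|a|)(k), \]
and apply Young's inequality on $\hbar\mathbb{Z}^n$ to obtain $\|T_\sigma a\|_{\ell^p}\leq \|C\|_\infty\|\omega\|_{\ell^1}\|a\|_{\ell^p}$ uniformly for all $1\leq p$. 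The main obstacle is the simultaneous balancing of $\tilde{Q}$ in $C$ and $\omega$: it must be large enough for $\omega\in\ell^1$ yet small enough that $2\tilde{Q}\delta$ does not consume the $\mu$-budget, and the value $\lfloor n/2\rfloor+1$ is precisely the one producing the clean threshold $\mu\leq-(n+2)\delta$ stated in the corollary.
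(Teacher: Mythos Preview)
Your proof is correct and follows essentially the same route as the paper: apply Lemma~\ref{fourier bound} with $\tilde{Q}=0$ for (i)--(ii) and with $\tilde{Q}=\lfloor n/2\rfloor+1$ for (iii), then check the summability of $C$ and $\omega$; the sharpness example is the same multiplication operator. The only cosmetic difference is that for (iii) the paper cites Proposition~\ref{Base theorem}(\ref{Base theorem i}) (Molahajloo's boundedness criterion, $C\in\ell^\infty$, $\omega\in\ell^1$) rather than writing out the Young's-inequality step explicitly, and it arrives at the threshold $\mu\le-(n+2)\delta$ via the interval condition $(n/2,-\mu/2\delta]\cap\mathbb{Z}\neq\emptyset$ instead of fixing $\tilde{Q}$ up front---but the content is identical.
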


\begin{proof}
    We only need to use the conditions presented in Theorem \ref{Theorem bounded symbols} and (\ref{Base theorem i}) of Proposition \ref{Base theorem}, along with the fact that $|\cdot|^{-r}\in\ell^1(\hbar\mathbb{Z}^n)$ if and only if $r>n$. In cases (\ref{bounded symbol i}-\ref{bounded symbol ii}) is necessary $\omega(\cdot)\in\ell^\infty(\hbar\mathbb{Z}^n)$, this always holds as a consequence of the definition (\ref{w()}). Meanwhile, to prove (\ref{bounded symbol i}) we see that $C(\cdot)\in\ell^p(\hbar\mathbb{Z}^n)$ if and only if $p(\mu+2Q\delta)<-n$, this inequality is equivalent to 
    \[\mu<-\frac{n}{p}-2Q\delta, \]
    where $Q\in\mathbb{N}_0$ and the lowest value of the right side is when $Q=0$. For the case (\ref{bounded symbol ii}) we only need to $C(\cdot)$ to be bounded, this happens when $\mu+2Q\delta\leq 0$ and again our best choice is $Q=0$.\\
    \indent In order to prove (\ref{bounded symbol iii}) we will need a different strategy. Due the Remark \ref{remark theorem bounded} for (\ref{bounded symbol iii})  we will use Proposition \ref{Base theorem}. It is necessary that 
    \[\mu+2Q\delta\leq 0 \ , \ -2Q<-n.\]
    Thus
    \[\frac{n}{2}<Q\leq -\frac{\mu}{2\delta},\]
    where the condition $Q\in\mathbb{N}_0$ plays a crucial role because we only need 
    \[\left(\frac{n}{2},-\frac{\mu}{2\delta}\right]\cap \mathbb{Z}\neq \emptyset.\]
    This fact is equivalent to the inequality
    \[-\frac{\mu}{2\delta}-\frac{n}{2}\geq 1,\]
    which results on the condition $\mu\leq -(n+2)\delta$. Finally, to prove the sharpness of \ref{bounded symbol ii}-\ref{bounded symbol iii}, we consider the operator 
    $(T_\epsilon f)(k)=|k|^\epsilon f(k),$
    for every $\epsilon>0 $ with symbol $\sigma(k,\theta)=|k|^\epsilon$. We have $\|T_\epsilon e_m \|_{\ell^p(\hbar\mathbb{Z}^n)}=|k|^\epsilon$, then $T_\epsilon$ which has order $\epsilon$ is not bounded in any $\ell^p(\hbar\mathbb{Z}^n)$. 

\end{proof}
\begin{corollary}\label{lp bound special case}
    If $\sigma$ satisfies the hypothesis of the previous theorem, $\delta=0$ and $\mu\leq 0$. Then $T_\sigma$ is $(p,p)$ bounded for every $p\geq 1$.
\end{corollary}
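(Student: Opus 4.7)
The plan is to observe that Corollary \ref{lp bound special case} is essentially the specialization $\delta=0$ of Corollary \ref{Theorem bounded symbols}(iii), so the natural approach is a direct invocation. The condition $\mu \leq -(n+2)\delta$ collapses to $\mu \leq 0$ when $\delta = 0$, and the conclusion $T_\sigma \in \mathcal{B}(\ell^p)$ for every $1 \leq p$ is verbatim what Corollary \ref{lp bound special case} claims. Writing it as a separate corollary is justified because in this regime the $\delta$-dependent trade-off disappears and the statement becomes particularly clean.

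For a self-contained argument, I would first invoke Lemma \ref{fourier bound} with the fixed choice $\tilde{Q} = \lfloor n/2 \rfloor + 1$, which is available by the hypothesis on the number of available $\theta$-derivatives. With $\delta = 0$ the lemma yields
\begin{equation*}
|(\mathcal{F}_{\mathbb{T}^n}\sigma)(k,m)| \leq C_Q\,(1+|k|)^{\mu}\left(1+\tfrac{1}{\hbar}|m|\right)^{-2\tilde{Q}},
\end{equation*}
so that in the notation of \eqref{fourier inequality} one can take $C(k) = C_Q(1+|k|)^\mu$ and $\omega(m) = (1+|m|/\hbar)^{-2\tilde{Q}}$.

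The two key observations are then: (a) since $\mu \leq 0$, the factor $(1+|k|)^\mu$ is bounded by $1$, so $C(\cdot) \in \ell^\infty(\hbar\mathbb{Z}^n)$; and (b) since $2\tilde{Q} > n$, the power $(1+|m|/\hbar)^{-2\tilde{Q}}$ is summable, so $\omega(\cdot) \in \ell^1(\hbar\mathbb{Z}^n)$. Both hypotheses of Proposition \ref{Base theorem}(i) are satisfied, which yields $T_\sigma \in \mathcal{B}(\ell^p(\hbar\mathbb{Z}^n))$ for every $1 \leq p < \infty$, completing the proof.

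There is really no substantive obstacle here; the only thing to watch is that the choice $\tilde{Q} = \lfloor n/2 \rfloor + 1$ simultaneously satisfies both $2\tilde{Q} > n$ (needed for summability of $\omega$) and $\tilde{Q} \leq Q$ where $Q = \lfloor n/2 \rfloor + 1$ is the maximal order of derivatives allowed by the hypothesis. This is the minimal integer exceeding $n/2$, exactly the threshold that appears in every application of Lemma \ref{fourier bound} throughout the paper, so no delicate balancing is required.
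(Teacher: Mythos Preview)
Your proposal is correct and matches the paper's approach: the paper gives no separate proof for this corollary, treating it as the immediate specialization $\delta=0$ of Corollary~\ref{Theorem bounded symbols}(\ref{bounded symbol iii}), and the self-contained argument you spell out (Lemma~\ref{fourier bound} with $\tilde Q=\lfloor n/2\rfloor+1$, then Proposition~\ref{Base theorem}(\ref{Base theorem i})) is exactly how that part of Corollary~\ref{Theorem bounded symbols} is proved in the paper.
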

The Corollary \ref{lp bound special case} coincides with the equivalent results presented in \cite[Theorem 5.2]{botchway2020difference} and \cite[Theorem 2.8]{rabinovich2004pseudodifference}. Moreover, since our proof is not based on the pseudo-differential calculus, the conditions presented in Corollary \ref{Theorem bounded symbols} include the exotic cases such as $\rho=\delta=0$ or $\rho=\delta=1$. 

\indent Finally, we will obtain a similar result with the compactness criterion given in Proposition \ref{Base theorem} .

\begin{corollary}\label{compact criterion}
    If $\sigma$ satisfies the hypothesis of the Corollary \ref{Theorem bounded symbols} and 
    \[\mu<-(n+2)\delta.\]
    Then $T_\sigma:\ell^p(\hbar\mathbb{Z}^n)\to\ell^p(\hbar\mathbb{Z}^n)$ is a compact operator for every $ p\geq 1$. Additionally, this inequality is sharp for $\delta=0$.
\end{corollary}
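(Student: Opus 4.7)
The plan is to mirror the proof of Corollary \ref{Theorem bounded symbols}.\ref{bounded symbol iii}, but replace the boundedness item of Proposition \ref{Base theorem} with the compactness item \ref{Base theorem ii}. Having assumed the symbol inequality with $|\beta|\leq \lfloor n/2\rfloor+1$, Lemma \ref{fourier bound} supplies the factorization $|(\mathcal{F}_{\mathbb{T}^n}\sigma)(k,m)|\leq C(k)\omega(m)$ with $C(k)=C_Q(1+|k|)^{\mu+2Q\delta}$ and $\omega(m)=(1+\hbar^{-1}|m|)^{-2Q}$, for any nonnegative integer $Q\leq \lfloor n/2\rfloor+1$. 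To apply Proposition \ref{Base theorem}.\ref{Base theorem ii} I need a single integer $Q$ for which simultaneously $C(k)\to 0$ as $|k|\to\infty$ and $\omega\in\ell^{1}(\hbar\mathbb{Z}^n)$, that is
\begin{equation*}
\mu+2Q\delta<0\qquad\text{and}\qquad 2Q>n.
\end{equation*}

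The first step is the existence argument. Assuming $\delta>0$ (the case $\delta=0$ is immediate), the two inequalities require $Q\in(n/2,\,-\mu/(2\delta))$. The hypothesis $\mu<-(n+2)\delta$ is exactly $-\mu/(2\delta)>n/2+1$, so this open interval has length strictly greater than $1$ and hence contains the integer $Q_0=\lfloor n/2\rfloor+1$. This is the same integer already used in the boundedness proof, so in particular $Q_0\leq \lfloor n/2\rfloor+1$ is compatible with the regularity assumed in the statement. With this choice, Proposition \ref{Base theorem}.\ref{Base theorem ii} applies and yields compactness of $T_\sigma$ on $\ell^p(\hbar\mathbb{Z}^n)$ for every $p\geq 1$.

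For sharpness when $\delta=0$, the threshold becomes $\mu<0$, and I need to exhibit a symbol with $\mu=0$ whose operator fails to be compact. The natural candidate is $\sigma(k,\theta)\equiv 1$, which satisfies the required estimate with $\mu=0$, $\delta=0$ and produces the identity operator on $\ell^p(\hbar\mathbb{Z}^n)$; since $\ell^p(\hbar\mathbb{Z}^n)$ is infinite-dimensional the identity is not compact, so the inequality $\mu<0$ cannot be relaxed.

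The only real obstacle is the arithmetic step of fitting an integer into $(n/2,-\mu/(2\delta))$, and the bound $-(n+2)\delta$ is chosen precisely to guarantee an interval of length $>1$. Everything else is a direct citation of the previously established machinery, so no further calculation is needed.
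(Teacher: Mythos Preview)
Your argument for compactness is exactly the paper's: it invokes the same factorization from Lemma~\ref{fourier bound} and the same integer-in-interval argument, simply swapping item~(\ref{Base theorem i}) of Proposition~\ref{Base theorem} for item~(\ref{Base theorem ii}) and correspondingly replacing the non-strict inequality $\mu+2Q\delta\le 0$ by the strict one. The paper in fact declines to write this part out, saying only that it is ``equivalent to the previous corollary''.

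The only genuine difference is the sharpness example. You take $\sigma\equiv 1$, so $T_\sigma$ is the identity on an infinite-dimensional space, which is certainly not compact. The paper instead uses the forward difference $(Tf)(k)=f(k+\hbar)-f(k)$ with symbol $e^{2\pi i\theta}-1\in S^0_{1,0}$ and checks directly that $\{Te_i\}$ has no convergent subsequence. Both examples are order-zero symbols with $\delta=0$ and do the job; your choice is more economical, while the paper's has the minor advantage of being a non-multiplier (genuinely $\theta$-dependent) symbol, showing that the failure of compactness at $\mu=0$ is not an artifact of diagonal operators.
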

\begin{proof}
    We only prove the sharpness because the order condition is equivalent to the previous corollary. Consider the difference operator
    \[(Tf)(k)=f(k+\hbar)-f(k) \text{ with } k\in \hbar\mathbb{Z},\]
     whose symbol $\sigma(k,\theta)=e^{2\pi i \theta} -1 $ belongs to $S^0_{1,0}(\hbar\mathbb{Z}\times\mathbb{T})$. We need to prove that $T$ is not compact in any $\ell^p(\hbar\mathbb{Z})$. If we assume that $T$ is compact and consider the basis $\{e_i\}_{i\in\hbar\mathbb{Z}}\subset \ell^p(\hbar\mathbb{Z})$ which is a bounded sequence in every $\ell^p(\hbar\mathbb{Z})$, but we have that 
    
    \[ (Te_i)(k)=  \begin{cases} 
      1 & k=i-\hbar, \\
      -1 & k=i, \\
      0 & otherwise. 
   \end{cases}
\]
    The sequence $\{Te_i\}_{i\in\hbar\mathbb{Z}}$ does not contain any convergent subsequence, this fact shows that $T$ is not compact in any $\ell^p(\hbar\mathbb{Z})$. Equivalent operators can be obtained for higher dimensions, which makes the inequality sharp for $\delta=0$. 
    
\end{proof}

\section{r-Nuclearity conditions}\label{section nuclear}
The order conditions to ensure the r-nuclearity will help us obtain estimates for the growth of the eigenvalues, even if we only know the order of the pseudo-differential operator. \\
First, we recall the definition of an r-nuclear operator presented in \cite{delgado2015r}. Let $A:L^{p_1}(X_1,\mu_1)\to L^{p_2}(X_2,\mu_2)$ be a bounded operator with $(X_1,\mu_1)$ and $(X_2,\mu_2)$ being $\sigma$-finite measure spaces, $A$ is r-nuclear with $0<r\leq 1$ if it admits a representation 
\begin{equation}
    (Af)(x)=\int_{X_1}\left(\sum_{m=1}^{\infty} g_m(x)h_m(y) \right)f(y)d\mu_1(y), 
\end{equation}
such that 
\begin{equation}
    \sum_{m=1}^{\infty} \|g_m\|^r_{L^{p_2}(X_2,\mu_2)} \|h_m\|^r_{L^{p'_1}(X_1,\mu_1)}<\infty,
\end{equation}
where $1\leq p_1,p_2<\infty$ and $\frac{1}{p_1}+\frac{1}{p'_1}=1$.\\
\indent Following the strategy presented in \cite{delgado2013p} to develop sufficient conditions to ensure that the operator is 1-nuclear, we obtain the following theorem for r-nuclearity. 
\begin{theorem}
    Let $K:\hbar\mathbb{Z}^n\times \hbar\mathbb{Z}^n\to \mathbb{C}$ be a function such that 
    \[\sum_{k\in\hbar\mathbb{Z}^n}\left(\sum_{m\in\hbar\mathbb{Z}^n} |K(k,m)|^{p_2} \right)^{r/p_2} <\infty. \]
    Let $A:\ell^{p_1}(\hbar\mathbb{Z}^n)\to\ell^{p_2}(\hbar\mathbb{Z}^n)$, $1\leq p_1,p_2<\infty$, be the linear operator defined by 
    \[e_m(Ae_k)=K(k,m), \ \ \ \  k,m\in\hbar\mathbb{Z}^n,\]
    where $e_m$ is considered to be an element in $\ell^{p'_2}(\hbar\mathbb{Z}^n)$. Then A is a nuclear operator.
\end{theorem}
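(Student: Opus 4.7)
The strategy is to follow the method of \cite{delgado2013p} (cited as Proposition \ref{Base theorem}.\ref{Base theorem iii} for the $r=1$ case) and exhibit an explicit r-nuclear decomposition of $A$. Since both measure spaces are $\hbar\mathbb{Z}^n$ with counting measure, the integral in the definition of r-nuclearity becomes a sum, and the task reduces to writing the kernel of $A$ as $\sum_{j} g_j(x) h_j(y)$ with $\sum_j \|g_j\|_{\ell^{p_2}}^{r} \|h_j\|_{\ell^{p'_1}}^{r} < \infty$.

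First I would identify the kernel of $A$. Expanding $f = \sum_k f(k) e_k$ and using linearity together with the defining relation $e_m(A e_k) = K(k,m)$, one obtains $(Af)(m) = \sum_k K(k,m) f(k)$, so $A$ acts as summation against the kernel $\tilde{K}(x,y) := K(y,x)$, with output variable $x$ and input variable $y$.

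Next I would propose the decomposition indexed by $j \in \hbar\mathbb{Z}^n$ given by
\[
h_j(y) := \delta_{j,y}, \qquad g_j(x) := K(j,x),
\]
so that $h_j = e_j$ as an element of $\ell^{p'_1}(\hbar\mathbb{Z}^n)$ and $g_j$ is the ``$j$-th row'' of $K$ viewed as an element of $\ell^{p_2}(\hbar\mathbb{Z}^n)$. The identity $\sum_{j} g_j(x) h_j(y) = K(y,x)$ holds because the sum collapses to its $j=y$ term. Since $\|h_j\|_{\ell^{p'_1}} = 1$ and $\|g_j\|_{\ell^{p_2}}^{p_2} = \sum_x |K(j,x)|^{p_2}$, the required summability becomes
\[
\sum_{j\in\hbar\mathbb{Z}^n} \|g_j\|_{\ell^{p_2}}^{r}\, \|h_j\|_{\ell^{p'_1}}^{r} \;=\; \sum_{j\in\hbar\mathbb{Z}^n} \left(\sum_{x\in\hbar\mathbb{Z}^n} |K(j,x)|^{p_2}\right)^{r/p_2},
\]
which is finite by hypothesis (with $j$ playing the role of $k$ and $x$ the role of $m$).

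The argument is essentially a direct translation of the discrete structure of the problem, so there is no real obstacle; the only mildly subtle point is matching the convention that the outer sum in the hypothesis is taken over the first variable of $K$, which corresponds to the \emph{input} variable of $A$ and therefore to the $h_j$ factor rather than the $g_j$ factor of the nuclear representation. Once the decomposition is in place, verifying that the associated summation operator agrees with $A$ on each basis vector $e_k$ (and hence, by the uniform bound coming from r-nuclearity and density of the span, as operators on $\ell^{p_1}$) is a one-line computation, which completes the proof.
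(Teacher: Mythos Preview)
Your proposal is correct and essentially identical to the paper's proof: the decomposition $g_j=K(j,\cdot)$, $h_j=e_j$ is exactly the paper's $Af=\sum_k e_k(f)\,Ae_k$ (since $(Ae_j)(x)=K(j,x)$), and both reduce the $r$-nuclear summability to the hypothesis in the same way. The only cosmetic difference is that the paper first records the boundedness estimate $\|Af\|_{\ell^{p_2}}\le \|f\|_{\ell^\infty}\sum_k\|Ae_k\|_{\ell^{p_2}}$ via Minkowski before checking the $r$-summability, whereas you defer boundedness to the end.
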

\begin{proof}
    For all $f\in\ell^{p_1}(\hbar\mathbb{Z}^n)$, we have
    \[f=\sum_{k\in\hbar\mathbb{Z}^n} e_k(f)e_k, \]
    then 
    \[Af=\sum_{k\in\hbar\mathbb{Z}^n} e_k(f)Ae_k. \]
    Using Minkowski's inequality to obtain 
    \begin{align*}
        \|Af \|_{\ell^{p_2}(\hbar\mathbb{Z}^n)}&\leq \sum_{k\in\hbar\mathbb{Z}^n} \|e_k(f)Ae_k \|_{\ell^{p_2}(\hbar\mathbb{Z}^n)}\\
        &\leq \sup_{k} \|f(k)\| \sum_{k\in\hbar\mathbb{Z}^n} \|Ae_k \|_{\ell^{p_2}(\hbar\mathbb{Z}^n)}\\
        & \leq \|f\|_{\ell^{\infty}(\hbar\mathbb{Z}^n)}  \sum_{k\in\hbar\mathbb{Z}^n}\left(\sum_{m\in\hbar\mathbb{Z}^n} |K(k,m)|^{p_2} \right)^{1/p_2} <\infty,
    \end{align*}
    and ensure the boundedness of  $A$. Moreover, the decomposition of $A$ satisfies 
    \begin{align*}
    \sum_{k\in\hbar\mathbb{Z}^n} \|Ae_k\|^r_{\ell^{p_2}(\hbar\mathbb{Z}^n)} \|e_k\|^r_{\ell^{p'_1}(\hbar\mathbb{Z}^n)}&=\sum_{k\in\hbar\mathbb{Z}^n} \|Ae_k\|^r_{\ell^{p_2}(\hbar\mathbb{Z}^n)}\\
    &\leq \sum_{k\in\hbar\mathbb{Z}^n}\left(\sum_{m\in\hbar\mathbb{Z}^n} |K(k,m)|^{p_2} \right)^{r/p_2}<\infty
    \end{align*}
\end{proof}
\begin{corollary}
    Let $A:\ell^{p_1}(\hbar\mathbb{Z}^n)\to\ell^{p_2}(\hbar\mathbb{Z}^n)$ be defined as in the previous theorem. If there exists functions $C(\cdot)\in \ell^{r}(\hbar\mathbb{Z}^n)$ and $\omega(\cdot)\in \ell^{p_2}(\hbar\mathbb{Z}^n)$ such that
    \[|K(k,m)|\leq C(k)\omega(m), \ \ \  k,m\in\hbar\mathbb{Z}^n.\]
    Then $A$ is r-nuclear with $0<r\leq 1.$
\end{corollary}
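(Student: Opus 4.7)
The plan is to invoke the preceding theorem directly by showing that the separable pointwise bound forces the double-sum hypothesis $\sum_{k}\bigl(\sum_{m}|K(k,m)|^{p_2}\bigr)^{r/p_2}<\infty$. Nothing new is needed beyond a clean application of Fubini/Tonelli (for nonnegative terms) and the homogeneity of the $\ell^{p_2}$-norm.

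First I would fix $k\in\hbar\mathbb{Z}^n$ and factor the inner sum using $|K(k,m)|\leq C(k)\omega(m)$:
\[
\sum_{m\in\hbar\mathbb{Z}^n}|K(k,m)|^{p_2}\leq |C(k)|^{p_2}\sum_{m\in\hbar\mathbb{Z}^n}|\omega(m)|^{p_2}=|C(k)|^{p_2}\,\|\omega\|_{\ell^{p_2}(\hbar\mathbb{Z}^n)}^{p_2}.
\]
Next, raising to the power $r/p_2$ gives the pointwise-in-$k$ bound
\[
\Bigl(\sum_{m\in\hbar\mathbb{Z}^n}|K(k,m)|^{p_2}\Bigr)^{r/p_2}\leq \|\omega\|_{\ell^{p_2}(\hbar\mathbb{Z}^n)}^{r}\,|C(k)|^{r}.
\]

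Summing over $k$ and using the hypothesis $C(\cdot)\in\ell^r(\hbar\mathbb{Z}^n)$ yields
\[
\sum_{k\in\hbar\mathbb{Z}^n}\Bigl(\sum_{m\in\hbar\mathbb{Z}^n}|K(k,m)|^{p_2}\Bigr)^{r/p_2}\leq \|\omega\|_{\ell^{p_2}(\hbar\mathbb{Z}^n)}^{r}\,\|C\|_{\ell^{r}(\hbar\mathbb{Z}^n)}^{r}<\infty,
\]
which is precisely the hypothesis of the preceding theorem. Hence $A$ is $r$-nuclear with $0<r\leq 1$.

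There is essentially no obstacle here: the argument is a two-line separation-of-variables estimate, and the condition $0<r\leq 1$ enters only through the definition of $r$-nuclearity used in the preceding theorem, not through the estimate itself. The only minor point to check is that the assumption $\omega\in\ell^{p_2}(\hbar\mathbb{Z}^n)$ (rather than $\ell^{rp_2/(\ldots)}$ or similar) is exactly what is needed so that the factor $\|\omega\|_{\ell^{p_2}}^{p_2}$ produced by the inner sum becomes a finite constant, after which the outer $\ell^r$ condition on $C$ closes the estimate.
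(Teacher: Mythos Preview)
Your proof is correct and is exactly the intended argument: the paper states this corollary without proof because it is an immediate consequence of the preceding theorem via the separation-of-variables estimate you wrote out. There is nothing to add.
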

The following corollary is a direct consequence of the previous one and the decay rate obtained in \cite{reinov2013distribution} for the eigenvalues of r-nuclear operators.
\begin{corollary}\label{eigenvalue decay}
    Let $\sigma$ be a symbol that satisfies the hypothesis of Corollary \ref{Theorem bounded symbols}. If 
    \[\mu<-\frac{n}{r}-\left(\frac{n}{p_2}+2  \right)\delta. \]
     Then $T_\sigma:\ell^{p_1}(\hbar\mathbb{Z}^n)\to\ell^{p_2}(\hbar\mathbb{Z}^n)$ is r-nuclear with $0<r\leq 1$ and $1\leq p_1,p_2<\infty$. Furthermore we have the following rate of decay for the eigenvalues when $p_1=p_2=p$
    \[ \lambda_j(T_\sigma)=O\left(j^{-\frac{1}{t}}\right), \]
    where 
    \[\frac{1}{r}=\frac{1}{t}+\left|\frac{1}{p}-\frac{1}{2}\right|.\]
\end{corollary}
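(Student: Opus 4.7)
The plan is to reduce this corollary directly to the preceding one by identifying the kernel with $K(k,m)=(\mathcal{F}_{\mathbb{T}^n}\sigma)(k,k-m)$ and then choosing an appropriate smoothing parameter. Since $\sigma$ satisfies the derivative bound in the toroidal variable up to order $\lfloor n/2\rfloor+1$, Lemma \ref{fourier bound} gives
\[ |K(k,m)|\leq C_Q(1+|k|)^{\mu+2\tilde{Q}\delta}\left(1+\tfrac{1}{\hbar}|k-m|\right)^{-2\tilde{Q}}, \]
for any integer $\tilde{Q}\leq \lfloor n/2\rfloor+1$. I would then set $C(k):=C_Q(1+|k|)^{\mu+2\tilde{Q}\delta}$ and $\omega(m):=(1+|m|/\hbar)^{-2\tilde{Q}}$ so as to invoke the previous corollary once the integrability conditions $C\in\ell^r$ and $\omega\in\ell^{p_2}$ are verified.

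The first step is therefore to choose $\tilde{Q}\in\mathbb{N}$. For $\omega\in\ell^{p_2}(\hbar\mathbb{Z}^n)$ I need $2\tilde{Q}p_2>n$, which selects $\tilde{Q}$ as the smallest integer strictly larger than $n/(2p_2)$; in particular $\tilde{Q}\leq n/(2p_2)+1$, and this integer is $\leq\lfloor n/2\rfloor+1$ so the lemma is applicable. For $C\in\ell^r(\hbar\mathbb{Z}^n)$ I need $r(\mu+2\tilde{Q}\delta)<-n$, i.e.\ $\mu<-n/r-2\tilde{Q}\delta$. Using the bound $2\tilde{Q}\leq n/p_2+2$, this is implied by the standing hypothesis $\mu<-n/r-(n/p_2+2)\delta$. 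The $r$-nuclearity of $T_\sigma$ on $\ell^{p_1}\to\ell^{p_2}$ then follows from the previous corollary applied with these choices.

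For the eigenvalue decay in the case $p_1=p_2=p$, I would invoke the distribution result of Reinov cited as \cite{reinov2013distribution}: an $r$-nuclear operator on $\ell^p$ has eigenvalues satisfying $\lambda_j=O(j^{-1/t})$ with $1/t=1/r-|1/p-1/2|$. Since the Reinov estimate is quoted verbatim, invoking it requires no additional work beyond checking its hypotheses (which reduce to the just-proved $r$-nuclearity together with $0<r\leq 1$).

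The main obstacle is essentially bookkeeping: the parameter $\tilde{Q}$ must be an integer while the sharp constraint $\tilde{Q}>n/(2p_2)$ is continuous, so one has to verify that the integer losses are absorbed into the ``$+2$'' and ``$n/p_2$'' terms of the threshold on $\mu$. Beyond that, the proof is a direct assembly of Lemma \ref{fourier bound}, the previous corollary, and Reinov's eigenvalue distribution theorem, with no new analysis required.
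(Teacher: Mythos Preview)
Your proposal is correct and follows exactly the route the paper intends: the paper itself gives no detailed proof, stating only that the corollary ``is a direct consequence of the previous one and the decay rate obtained in \cite{reinov2013distribution},'' and your argument is precisely that assembly---Lemma~\ref{fourier bound} to produce the factorised kernel bound, the integer choice of $\tilde Q$ to place $C\in\ell^r$ and $\omega\in\ell^{p_2}$, and Reinov's theorem for the eigenvalue decay. One cosmetic point: the bound from Lemma~\ref{fourier bound} gives $|K(k,m)|\leq C(k)\,\omega(k-m)$ rather than $C(k)\,\omega(m)$, but the translation invariance of the $\ell^{p_2}$-sum in $m$ makes this immaterial when feeding into the preceding theorem, and the paper treats this shift the same way throughout.
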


\section{First order approximation}\label{section approximation}
In Corollary \ref{compact criterion} we have already seen that for $\mu< -(n+2)\delta$ we have 
\[  \left| (\mathcal{F}_{\mathbb{T}} \sigma)(k,k-m) \right| \leq C_{\lfloor\frac{n}{2}\rfloor +1}  ( 1+|k| )^{\mu+(n+2)\delta}\left( 1+\frac{1}{\hbar}|k-m| \right)^{-n-2},\]
for all $k,m\in \hbar\mathbb{Z}^n $. This bound indicates that the associated matrix is \textit{almost diagonal}, in other words, we can write $T_\sigma:\ell^{2}(\hbar\mathbb{Z}^n)\to\ell^{2}(\hbar\mathbb{Z}^n)$ as 
\[T_\sigma=D_\sigma+R,\]
where $D_\sigma$ is a diagonal matrix with entries 
$(D_\sigma)_{k,k}=\int_{\mathbb{T}^n} \sigma(k,\theta)d\theta,$
and R is a residue with $\|R\|<\epsilon$, both are bounded as consequence of the Proposition \ref{general bounded criterion} .\\
This fact allows us to treat $T_\sigma$ as a perturbation of $D_\sigma$. Now let $\{e_k(\cdot)\}_{k\in\hbar\mathbb{Z}^n}$ be the standard basis of every $\ell^p(\hbar\mathbb{Z}^n)$, those are the eigenvectors of $D_\sigma$
\[D_\sigma e_k=\lambda_k e_k,\]
\[\lambda_k=\int_{\mathbb{T}^n} \sigma(k,\theta)d\theta. \]
We can write the eigenvalues and eigenvectors $(\widetilde{\lambda}_k,\phi_k)$ of $T_\sigma$ as
\[\widetilde{\lambda}_k=\lambda_k +\Delta\lambda_k,\]
\[\phi_k=e_k +\Delta e_k,\]
where $\{\phi_k\}_{k\in\hbar\mathbb{Z}^n}$ is an orthonormal basis. Hence
\[(D_\sigma+R )(e_k+\Delta e_k)=(\lambda_k+\Delta\lambda_k)(e_k+\Delta e_k)\]
\begin{equation}\label{EigenAprox}D_\sigma(\Delta e_k)+R(e_k)=\lambda_k\Delta 
e_k+\Delta\lambda_k e_k+E,
\end{equation}
with $E=(\Delta\lambda_kI-R)(\Delta e_k)$. Using the fact that $\{e_k(\cdot)\}_{k\in\hbar\mathbb{Z}^n}$ is an orthonormal basis we can write the vectors $\Delta\phi_k$ as
\[ \Delta e_k=\sum_{j\in\hbar\mathbb{Z}^n} \epsilon_{k,j} e_j.  \]

\noindent This fact together with equation (\ref{EigenAprox}) and the continuity of $D_\sigma$ allows us to obtain 
\[\left(\sum_{j\in\hbar\mathbb{Z}^n} \epsilon_{k,j}\lambda_j e_j \right)+R(e_k)=  \left(\sum_{j\in\hbar\mathbb{Z}^n} \epsilon_{k,j}\lambda_k e_j \right)+\Delta\lambda_k e_k+E.\]
If we multiply it by $e_k$ we conclude that 
\[\braket{R(e_k),e_k}=\Delta\lambda_k+\braket{E,e_k}.\]
As we define $R$ to be the off-diagonal of $T_\sigma$ is clear that $\braket{R(e_k),e_k}=0$, moreover if we assume $\epsilon_{k,k}\neq -1$, then
\[ \Delta\lambda_k= -\braket{E,e_k}=\left(\sum_{j\in\hbar\mathbb{Z}^n} \epsilon_{k,j} \braket{Re_j,e_k}\right)-\epsilon_{k,k}\Delta\lambda_k \]
\[\Delta\lambda_k=\frac{1}{1+\epsilon_{k,k}}\sum_{j\in\hbar\mathbb{Z}^n} \epsilon_{k,j} \braket{Re_j,e_k}. \]
Since $|\epsilon_{k,j}|\leq 1$ for $j\neq k$ and the convergence of $\sum_{j\in\hbar\mathbb{Z}^n} |\epsilon_{k,j}|^2  $, we can ensure for a sufficiently large $J(k)>0$ we have 
\[\frac{|\epsilon_{k,j}|}{|1+\epsilon_{k,k}|}<1, \ \ j>J(k). \]
Finally, for some constant $C_k>0$ we have 
\[|\Delta\lambda_k|\leq C_k\sum_{j\in\hbar\mathbb{Z}^n}  |\braket{Re_j,e_k}|=C_k\sum_{j\in\hbar\mathbb{Z}^n}  | (\mathcal{F}_{\mathbb{T}} \sigma)(k,k-j)|, \]
\[|\Delta\lambda_k|\leq C_{\lfloor\frac{n}{2}\rfloor +1,k} \ ( 1+|k| )^{\mu+(n+2)\delta}\sum_{j\in\hbar\mathbb{Z}^n} \left( 1+\frac{1}{\hbar}|j| \right)^{-n-2}.\]
But the constant depends strictly on $k$  in the case that $\epsilon_{k,k}\backsim -1$. To ensure that $\epsilon_{k,k}$ is not close to $-1$ we need to use the following equation
\begin{equation}\label{Error equation}
2R(\epsilon_{k,k})+\|\Delta e_k\|^2=0,
\end{equation}
which is a consequence of $\|\phi_k\|=\|e_k+\Delta e_k\|=1$. Then we only need to assume that $\|\Delta e_k\|\leq 1 $. If we do not assume anything on the norm $\|\Delta e_k\|$ we could have that $\|\Delta e_k\|\backsim 2$, which by the equation (\ref{Error equation}) is equivalent to $\epsilon_{k,k}\backsim -1$, making impossible to bound $\Delta \lambda_k$. 

\begin{lemma} If $\mu<-(n+2)\delta$ and assuming that $\|\Delta e_k\|\leq 1 $, then the eigenvalues $\lambda_k$ of $T_\sigma:\ell^{2}(\hbar\mathbb{Z}^n)\to\ell^{2}(\hbar\mathbb{Z}^n)$ can be approximated by 
\[ \lambda_k=\int_{\mathbb{T}^n} \sigma(k,\theta)d\theta + O(|k|^{\mu+(n+2)\delta}).\]
\end{lemma}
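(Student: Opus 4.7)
The plan is to formalize the perturbative calculation that is sketched in the paragraphs preceding the lemma. I would first record the splitting $T_\sigma = D_\sigma + R$ on $\ell^2(\hbar\mathbb{Z}^n)$, where $(D_\sigma)_{k,k} = \int_{\mathbb{T}^n}\sigma(k,\theta)d\theta = \lambda_k$ is diagonal and $R$ consists of the off-diagonal entries $(\mathcal{F}_{\mathbb{T}^n}\sigma)(k,k-m)$. Boundedness of both summands on $\ell^2$ follows from Proposition \ref{general bounded criterion} together with the estimate displayed at the start of Section \ref{section approximation}, which is itself a consequence of Lemma \ref{fourier bound}. Since $\{e_k\}_{k\in\hbar\mathbb{Z}^n}$ is an eigenbasis for $D_\sigma$, every normalized eigenvector $\phi_k$ of $T_\sigma$ may be written as $\phi_k = e_k + \Delta e_k$ with $\Delta e_k = \sum_j \epsilon_{k,j}e_j$, and the corresponding eigenvalue as $\widetilde{\lambda}_k = \lambda_k + \Delta\lambda_k$.

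Projecting the eigenvalue equation $(D_\sigma + R)\phi_k = \widetilde{\lambda}_k \phi_k$ onto $e_k$ and cancelling the diagonal terms, exactly as was done through (\ref{EigenAprox}), produces the identity
\[\Delta\lambda_k = \frac{1}{1 + \epsilon_{k,k}}\sum_{j\in\hbar\mathbb{Z}^n}\epsilon_{k,j}\braket{R e_j, e_k}.\]
To extract a $k$-independent constant from the prefactor I would apply the normalization $\|\phi_k\| = 1$, which gives (\ref{Error equation}): $2\operatorname{Re}(\epsilon_{k,k}) + \|\Delta e_k\|^2 = 0$. Under the hypothesis $\|\Delta e_k\|\leq 1$ this forces $\operatorname{Re}(\epsilon_{k,k}) \in [-1/2, 0]$ and hence $|1 + \epsilon_{k,k}|\geq 1/2$. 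Combined with the trivial bound $|\epsilon_{k,j}|\leq \|\Delta e_k\|\leq 1$, and noting that the $j = k$ term drops out because $R$ is off-diagonal, this yields
\[|\Delta\lambda_k|\leq 2\sum_{j\in\hbar\mathbb{Z}^n}|(\mathcal{F}_{\mathbb{T}^n}\sigma)(k,k-j)|.\]

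I would then close the estimate by inserting the bound already recorded from Corollary \ref{compact criterion}, namely $|(\mathcal{F}_{\mathbb{T}^n}\sigma)(k,k-j)| \leq C_{\lfloor n/2\rfloor+1}(1+|k|)^{\mu+(n+2)\delta}(1 + \tfrac{1}{\hbar}|k-j|)^{-n-2}$. After the substitution $i = k - j$, the remaining sum $\sum_{i\in\hbar\mathbb{Z}^n}(1 + \tfrac{1}{\hbar}|i|)^{-n-2}$ is a convergent $p$-series on $\hbar\mathbb{Z}^n$ (the exponent $n+2$ strictly exceeds $n$), with value independent of $k$. This delivers $|\Delta\lambda_k| = O((1+|k|)^{\mu+(n+2)\delta})$, which is the claimed approximation.

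The main obstacle is the prefactor $1/(1+\epsilon_{k,k})$: if $\epsilon_{k,k}\sim -1$ then this factor blows up and any attempt at a uniform $O$-estimate collapses. This is precisely why the hypothesis $\|\Delta e_k\|\leq 1$ is needed, and why the argument fails in the pathological regime $\|\Delta e_k\|\sim 2$ alluded to above the lemma; the normalization identity (\ref{Error equation}) is what converts the qualitative bound on the eigenvector perturbation into a quantitative, $k$-uniform lower bound on $|1+\epsilon_{k,k}|$.
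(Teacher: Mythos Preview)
Your proposal is correct and follows the same perturbative route as the paper: split $T_\sigma=D_\sigma+R$, derive the identity $\Delta\lambda_k=(1+\epsilon_{k,k})^{-1}\sum_j\epsilon_{k,j}\langle Re_j,e_k\rangle$, control the prefactor via the normalization identity (\ref{Error equation}) together with the hypothesis $\|\Delta e_k\|\leq 1$, and then insert the Fourier bound from Lemma~\ref{fourier bound} and sum. Your handling of the prefactor is in fact cleaner than the paper's: you extract $|1+\epsilon_{k,k}|\geq 1/2$ directly from $\operatorname{Re}(\epsilon_{k,k})=-\tfrac{1}{2}\|\Delta e_k\|^2\in[-\tfrac{1}{2},0]$, whereas the paper first passes through a $k$-dependent constant $C_k$ and a tail argument with $J(k)$ before invoking the hypothesis to make the bound uniform.
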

From other perspective, when we see the eigenvector $\phi_k$ as an eigenvector approximation of $D_\sigma$ we obtain the following lemma, which together with $D_\sigma$ being compact allow us to conclude that for large $k$ the eigenvectors of $T_\sigma$ are approximations of eigenvectors of $D_\sigma$.
\begin{lemma} Under the same assumptions as in the previous lemma, then 

\begin{equation} 
\min_{\widetilde{k}\in\hbar\mathbb{Z}^n} \left|\widetilde{\lambda}_k-\int_{\mathbb{T}^n} \sigma(\widetilde{k},\theta) d\theta  \right| \leq  \|D_\sigma\phi_k - \widetilde{\lambda}_k\phi_k\|\leq \min \left\{\max_{\widetilde{k}\in\hbar\mathbb{Z}^n} \left|\widetilde{\lambda}_k-\int_{\mathbb{T}^n} \sigma(\widetilde{k},\theta) d\theta  \right|,\|R\|\right\},
\end{equation}
\end{lemma}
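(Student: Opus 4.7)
\medskip

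\noindent\textbf{Proof plan.} The plan is to exploit the fact that $D_\sigma$ is already diagonal in the orthonormal basis $\{e_j\}_{j\in\hbar\mathbb{Z}^n}$ with eigenvalues $\lambda_j=\int_{\mathbb{T}^n}\sigma(j,\theta)\,d\theta$, so that the residual vector $D_\sigma\phi_k-\widetilde{\lambda}_k\phi_k$ can be analysed coordinate by coordinate. First I would expand $\phi_k=\sum_{j\in\hbar\mathbb{Z}^n}\alpha_j e_j$, noting that $\|\phi_k\|=1$ implies $\sum_j|\alpha_j|^2=1$. Then, since $D_\sigma e_j=\lambda_j e_j$, linearity gives
\begin{equation*}
D_\sigma\phi_k-\widetilde{\lambda}_k\phi_k=\sum_{j\in\hbar\mathbb{Z}^n}(\lambda_j-\widetilde{\lambda}_k)\alpha_j e_j,
\end{equation*}
whence by orthonormality
\begin{equation*}
\|D_\sigma\phi_k-\widetilde{\lambda}_k\phi_k\|^2=\sum_{j\in\hbar\mathbb{Z}^n}|\lambda_j-\widetilde{\lambda}_k|^2|\alpha_j|^2.
\end{equation*}

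\noindent Next, the two pointwise envelopes of this weighted sum yield the two-sided bound. Bounding each factor $|\lambda_j-\widetilde{\lambda}_k|^2$ below by $\min_{\widetilde{k}}|\widetilde{\lambda}_k-\lambda_{\widetilde{k}}|^2$ and pulling the constant out of the sum (which equals $1$) gives the left-hand inequality, while replacing each factor by $\max_{\widetilde{k}}|\widetilde{\lambda}_k-\lambda_{\widetilde{k}}|^2$ gives one half of the right-hand inequality.

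\noindent For the second candidate in the right-hand minimum I would use the perturbation identity $T_\sigma=D_\sigma+R$. Since $\phi_k$ is an eigenvector of $T_\sigma$ with eigenvalue $\widetilde{\lambda}_k$, we have $D_\sigma\phi_k-\widetilde{\lambda}_k\phi_k=-R\phi_k$, and therefore
\begin{equation*}
\|D_\sigma\phi_k-\widetilde{\lambda}_k\phi_k\|=\|R\phi_k\|\leq \|R\|\,\|\phi_k\|=\|R\|,
\end{equation*}
using the boundedness of $R$ guaranteed by Proposition~\ref{general bounded criterion} under the assumption $\mu<-(n+2)\delta$. Combining the two upper estimates yields the minimum on the right.

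\noindent I do not expect any of these steps to pose a serious obstacle: the argument is essentially the standard residual estimate for approximate eigenpairs of a self-adjoint (here, diagonal) operator, combined with the perturbation identity. The only subtlety worth flagging is that the bounds rely on $\{e_j\}$ being an orthonormal basis of $\ell^2(\hbar\mathbb{Z}^n)$ and on $\phi_k$ having unit norm — both of which are already built into the setup of the previous lemma, so the hypothesis $\|\Delta e_k\|\leq 1$ carried over from that lemma is not needed for the present inequality chain but is kept for consistency with the interpretation of $\widetilde{\lambda}_k$ as the eigenvalues produced by the first-order construction.
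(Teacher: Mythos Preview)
Your proposal is correct and follows essentially the same route as the paper: the paper also starts from the identity $(D_\sigma-\widetilde{\lambda}_kI)\phi_k=-R\phi_k$, uses that $D_\sigma-\widetilde{\lambda}_kI$ is diagonal to get the $\min$/$\max$ bounds, and then bounds $\|R\phi_k\|$ by $\|R\|$. Your version simply makes the diagonal step explicit via the orthonormal expansion $\phi_k=\sum_j\alpha_je_j$, which is exactly what ``bounded by the minimum and maximum values of the diagonal'' unpacks to.
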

\begin{proof}
Due to $T_\sigma\phi_k=\widetilde{\lambda}_k\phi$, we have 
    \[(D_\sigma - \widetilde{\lambda}_kI)\phi_k=-R\phi_k, \]
where we can use that $(D_\sigma - \widetilde{\lambda}_kI)$ is diagonal, then it's norm can be bounded by the minimum and maximum values of the diagonal. 
\[\min_{\widetilde{k}\in\hbar\mathbb{Z}^n} \left|\widetilde{\lambda}_k-\int_{\mathbb{T}^n} \sigma(\widetilde{k},\theta) d\theta  \right| \leq  \|(D_\sigma - \widetilde{\lambda}_kI)\phi_k\|\leq \max_{\widetilde{k}\in\hbar\mathbb{Z}^n} \left|\widetilde{\lambda}_k-\int_{\mathbb{T}^n} \sigma(\widetilde{k},\theta) d\theta  \right|.\]
Finally, we have that $\|(D_\sigma - \widetilde{\lambda}_kI)\phi_k\|=\|R\|$ due to the first identity and the boundedness of $R$. 
\end{proof}

\section{Discrete Schrödinger operators}\label{section applications}
Let $H_{\hbar,V}$ be the discretization of the  Schrödinger operator in $\mathbb{R}^n$, defined as
\begin{equation}\label{Schrodinger operator}
(H_{\hbar,V}f)(k)=-\hbar^{-2}\Delta f+ V(k)f(k) \text{  for } k\in\hbar\mathbb{Z}^n \text{ and } V(\cdot)\geq0,
\end{equation}
where $\Delta$ is the discrete Laplacian defined as 
\[ (\Delta f)(k)=\sum_{j=1}^{n}f(k+\hbar e_j)-2f(k)+f(k-\hbar e_j),   \]
and $V(k)\geq 0$ being polynomial with order $\mu>0$.\\
\indent The operator $H_{\hbar,V}: \text{Dom}\ (H_{\hbar,V})\to \ell^2(\hbar\mathbb{Z}^n)$ is densely defined and self-adjoint with the domain $\text{Dom}\ (H_{\hbar,V})$ defined as
\[\text{Dom}\ (H_{\hbar,V})=\left\{u\in\ell^2(\hbar\mathbb{Z}^n): (H_{\hbar,V}+I)u\in \ell^2(\hbar\mathbb{Z}^n) \right\}. \]
\noindent In order to study the eigenvalues of the operator (\ref{Schrodinger operator}), we consider the perturbed version $(H_{\hbar,V}+\lambda I)\in \Psi^\mu_{1,0}(\hbar\mathbb{Z}^n\times\mathbb{T}^n) $ for some $\lambda>0$. This choice is due to the previous results obtained in \cite{dasgupta2023discrete} that guarantee its invertibility.

\begin{proposition}[\cite{dasgupta2023discrete}]
    Let $V\geq 0$ and $|V(k)|\to \infty$ as $|k|\to\infty$. Then $H_{\hbar,V}$ has a purely discrete spectrum. 
\end{proposition}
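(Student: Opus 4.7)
The plan is to show that $\sigma_{\mathrm{ess}}(H_{\hbar,V}) = \emptyset$; since $H_{\hbar,V}$ is already known to be densely defined and self-adjoint, this is equivalent to the spectrum consisting entirely of isolated eigenvalues of finite multiplicity, i.e.\ being purely discrete. The natural tool in the discrete setting is a Persson/Glazman-type variational argument.

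First I would observe that $-\hbar^{-2}\Delta$ is a bounded non-negative self-adjoint operator on $\ell^{2}(\hbar\mathbb{Z}^{n})$: its symbol
\[
\sigma_{\Delta}(\theta)\;=\;4\hbar^{-2}\sum_{j=1}^{n}\sin^{2}(\pi\theta_{j})
\]
is independent of $k$ and bounded on $\mathbb{T}^{n}$, so Corollary \ref{lp bound special case} (applied with $\mu=\delta=0$) yields boundedness, and non-negativity follows at once from the Plancherel identity. Combined with $V\geq 0$, this gives the quadratic-form lower bound
\[
\langle H_{\hbar,V}\phi,\phi\rangle\;\geq\;\sum_{k\in\hbar\mathbb{Z}^{n}}V(k)\,|\phi(k)|^{2}
\qquad\text{for every }\phi\in\text{Dom}(H_{\hbar,V}).
\]

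Second, for any $N>0$ the coercivity hypothesis $V(k)\to\infty$ guarantees that $K_{N}:=\{k\in\hbar\mathbb{Z}^{n}:V(k)<N\}$ is finite. Setting $\mathcal{H}_{N}:=\mathrm{span}\{e_{k}:k\in K_{N}\}$, which is a finite-dimensional subspace of $\text{Dom}(H_{\hbar,V})$ (each $H_{\hbar,V}e_{k}$ is a finite linear combination of basis vectors, hence in $\ell^{2}$), every $\phi\in\mathcal{H}_{N}^{\perp}\cap\text{Dom}(H_{\hbar,V})$ is supported off $K_{N}$ and therefore satisfies $\langle H_{\hbar,V}\phi,\phi\rangle\geq N\|\phi\|^{2}$. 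The min-max principle then bounds the number of eigenvalues of $H_{\hbar,V}$ strictly below $N$ by $\dim\mathcal{H}_{N}=|K_{N}|<\infty$, so $\sigma_{\mathrm{ess}}(H_{\hbar,V})\cap(-\infty,N)=\emptyset$. Letting $N\to\infty$ yields $\sigma_{\mathrm{ess}}(H_{\hbar,V})=\emptyset$.

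The main obstacle, though mild, is the careful verification of the min-max principle for the unbounded operator $H_{\hbar,V}$ on the specific domain defined in the paper: one needs the compactly supported vectors to form a core for the associated quadratic form so that the test-subspace comparison is valid. An alternative fully in the spirit of this paper would be to exhibit compactness of the resolvent $(H_{\hbar,V}+\lambda I)^{-1}$ directly, by controlling the decay of its matrix entries and invoking Corollary \ref{compact criterion} together with a Schur-test variant; that route is computationally heavier but would dovetail with the matrix-theoretic viewpoint developed in Section \ref{section matrix analysis}.
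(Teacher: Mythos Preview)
The paper does not give its own proof of this proposition: it is quoted verbatim from \cite{dasgupta2023discrete} and simply invoked. There is therefore no argument in the paper to compare your proposal against.

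Your argument is correct and is the standard Persson--Glazman route for Schr\"odinger operators with confining potentials. The form-core worry you flag is genuinely mild in this discrete setting: because $-\hbar^{-2}\Delta$ is a \emph{bounded} operator, the quadratic form of $H_{\hbar,V}$ has form domain $\{\phi\in\ell^{2}:\sum_{k}V(k)|\phi(k)|^{2}<\infty\}$, and finitely supported sequences are dense there in the form norm. Hence the min--max comparison on $\mathcal H_{N}^{\perp}$ is legitimate without further work. The alternative you sketch---proving compactness of $(H_{\hbar,V}+\lambda I)^{-1}$ directly---is in fact how such statements are most often established in the literature (and presumably in the cited reference), so mentioning it is apt even if it is heavier than needed here.
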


This ensures that we can choose some $\lambda<0$ in the resolvent set such that $(H_{\hbar,V}+\lambda I)^{-1}$ is bounded, where the symbol of $(H_{\hbar,V}+\lambda I)$ is
\begin{equation}\label{symbol schrodinger}
\sigma(k,\theta)=-2\hbar^{-2}\cos{2\pi\theta} +V(k)+(\lambda+2).
\end{equation}
We recall the definition of elliptic symbols. A symbol $\sigma\in S^{\mu}_{\rho,\delta}(\hbar\mathbb{Z}^n\times\mathbb{T}^n) $ is called elliptic (of order $\mu$) if there exist $C>0$ and $M>0$ such that 
\[|\sigma(k,\theta)|\geq C(1+|k|)^\mu\]
for all $\theta\in\mathbb{T}^n$ and for $|k|\geq M$, $k\in\hbar\mathbb{Z}^n$ \cite[Definition 4.5]{botchway2024semi}.
Clearly \eqref{symbol schrodinger} is elliptic with order $\mu$. Then, there exists a parametrix $P\in\Psi^{-\mu}_{1,0}(\hbar\mathbb{Z}^n\times\mathbb{T}^n)$ \cite[Theorem 4.7]{botchway2024semi}. These facts allows us to conclude our main result. 
\begin{theorem}
    There exists some $\lambda>0$ such that $(H_{\hbar,V}+\lambda I)^{-1}$ with $\mu\geq 1$ is r-nuclear. Moreover, we have the following rate of growth for the eigenvalues of $H_{\hbar\mathbb{Z}}$:
    \[C j^{\frac{1}{r}} \leq \lambda_j(H_{\hbar\mathbb{Z}})  \text{ as } j\to\infty, \]
    with $\frac{1}{\mu}<r\leq 1$.
\end{theorem}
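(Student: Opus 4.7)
The plan is to deduce the growth estimate for the eigenvalues of $H_{\hbar,V}$ from the $r$-nuclear decay obtained in Corollary \ref{eigenvalue decay}, applied to the inverse via a parametrix construction. Concretely, I would first exhibit $(H_{\hbar,V}+\lambda I)^{-1}$ as a perturbation of the parametrix $P$, then show that both $P$ and the smoothing correction are $r$-nuclear in the same range of $r$, and finally invert the eigenvalue asymptotics.

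\textbf{Step 1 (setup).} The symbol \eqref{symbol schrodinger} lies in $S^{\mu}_{1,0}(\hbar\mathbb{Z}^n\times\mathbb{T}^n)$ and is elliptic of order $\mu$, so the parametrix theorem cited from \cite{botchway2024semi} furnishes $P\in\Psi^{-\mu}_{1,0}(\hbar\mathbb{Z}^n\times\mathbb{T}^n)$ with
\[
P(H_{\hbar,V}+\lambda I)=I-K_{1},\qquad (H_{\hbar,V}+\lambda I)P=I-K_{2},
\]
where the symbols of $K_{1},K_{2}$ lie in $\bigcap_{N\geq 0}S^{-N}_{1,0}$. The preceding proposition guarantees that the spectrum of $H_{\hbar,V}$ is purely discrete and accumulates only at $+\infty$, so I can fix $\lambda>0$ in the resolvent set and invoke the bounded self-adjoint inverse $(H_{\hbar,V}+\lambda I)^{-1}$ on $\ell^{2}(\hbar\mathbb{Z}^n)$. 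Multiplying the first identity on the right by $(H_{\hbar,V}+\lambda I)^{-1}$ yields
\[
(H_{\hbar,V}+\lambda I)^{-1}=P+K_{1}(H_{\hbar,V}+\lambda I)^{-1}.
\]

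\textbf{Step 2 ($r$-nuclearity of each piece).} For $P$, I would apply Corollary \ref{eigenvalue decay} with $\rho=1$, $\delta=0$ and $p_{1}=p_{2}=2$: the order condition $-\mu<-n/r-(n/p_{2}+2)\delta$ collapses to $r>n/\mu$ (which, for the one-dimensional regime implicit in the statement, gives $r>1/\mu$), yielding a nonempty range $1/\mu<r\leq 1$ in which $P$ is $r$-nuclear. For the residue $K_{1}(H_{\hbar,V}+\lambda I)^{-1}$ I would use that $K_{1}$ is smoothing, so Corollary \ref{eigenvalue decay} makes it $r$-nuclear for \emph{every} $r\in(0,1]$, and then invoke the ideal property of the $r$-nuclear class to conclude that composition with the bounded operator $(H_{\hbar,V}+\lambda I)^{-1}$ preserves $r$-nuclearity. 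A sum of two $r$-nuclear operators (for $r\leq 1$) remains $r$-nuclear, establishing $r$-nuclearity of $(H_{\hbar,V}+\lambda I)^{-1}$ in the announced range.

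\textbf{Step 3 (eigenvalue growth).} By the decay clause of Corollary \ref{eigenvalue decay} specialised to $p=2$ (so that $t=r$), the singular values satisfy $\lambda_{j}\bigl((H_{\hbar,V}+\lambda I)^{-1}\bigr)=O(j^{-1/r})$. Self-adjointness together with $V\geq 0$ and $-\hbar^{-2}\Delta\geq 0$ gives $\lambda_{j}\bigl((H_{\hbar,V}+\lambda I)^{-1}\bigr)=(\lambda_{j}(H_{\hbar,V})+\lambda)^{-1}$, so inverting the asymptotic yields
\[
C\,j^{1/r}\leq \lambda_{j}(H_{\hbar,V})\qquad\text{as }j\to\infty.
\]

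\textbf{Main obstacle.} The delicate point is Step 2: I must verify that the smoothing operator $K_{1}$ really does satisfy the hypotheses of Corollary \ref{eigenvalue decay} (so that the Fourier-coefficient bound \eqref{fourier inequality} holds with the product form required there), and that $r$-nuclearity is stable under right composition by an unbounded-looking but in fact bounded operator $(H_{\hbar,V}+\lambda I)^{-1}$. A secondary subtlety is matching the exponent stated in the theorem with the $r>n/\mu$ threshold that Corollary \ref{eigenvalue decay} actually provides, which is consistent with the quoted form only when $n=1$.
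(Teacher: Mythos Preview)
Your proposal is correct and follows essentially the same route as the paper: build the parametrix $P\in\Psi^{-\mu}_{1,0}$, write $(H_{\hbar,V}+\lambda I)^{-1}=P+K_{1}(H_{\hbar,V}+\lambda I)^{-1}$, apply Corollary~\ref{eigenvalue decay} to make $P$ and $K_{1}$ $r$-nuclear, use the ideal property (the paper invokes the identification with the $r$-Schatten class on $\ell^{2}$) and then invert the eigenvalue decay. Your observation that the threshold coming from Corollary~\ref{eigenvalue decay} is $r>n/\mu$ rather than $r>1/\mu$ is accurate and is consistent with the paper only for $n=1$, which the paper's proof tacitly assumes.
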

\begin{proof}
    The operator$(H_{\hbar,V}+\lambda I)$  is elliptic, and by the calculus of the class $\Psi^{-\mu}_{1,0}(\hbar\mathbb{Z}\times\mathbb{T})$ exposed in \cite[Theorem 4.7]{botchway2024semi} there exists an operator $P\in \Psi^{-\mu}_{1,0}(\hbar\mathbb{Z}^n\times\mathbb{T}^n)$ such that 
    \[ (H_{\hbar,V}+\lambda I)P\backsim P(H_{\hbar,V}+\lambda I)\backsim I \textbf{ modulo } \Psi^{\infty}_{1,0}(\hbar\mathbb{Z}\times\mathbb{T}).  \]
    In other words, we have 
    \[ P(H_{\hbar,V}+\lambda I)=I -R,  \]
    where $R\in\Psi^{\infty}_{1,0}(\hbar\mathbb{Z}\times\mathbb{T})$. Due to the existence of the inverse of $(H_{\hbar,V}+\lambda I) $ we have
    \[P=(H_{\hbar,V}+\lambda I)^{-1}+R(H_{\hbar,V}+\lambda I)^{-1}, \]
    \begin{equation}\label{Parametrix-inverse relation}
    (H_{\hbar,V}+\lambda I)^{-1}=P-R(H_{\hbar,V}+\lambda I)^{-1}.
    \end{equation}
    Now we can use the order condition given in Corollary \ref{eigenvalue decay} to ensure that $P$ and $R$ are $r-$nuclear operators. \\
    \indent We are dealing with operators defined on $\ell^2(\hbar\mathbb{Z})$, where the definition of $r-$nuclearity coincides with the definition for $r-$Schatten-von Neumann operators \cite{oloff}, and these operators form a two-sided ideal of $\mathcal{B}(\ell^2(\hbar\mathbb{Z}^n))$. We have $(H_{\hbar,V}+\lambda I)^{-1}$ bounded, so we conclude by the ideal properties that $R(H_{\hbar,V}+\lambda I)^{-1}$ is $r-$nuclear for $\frac{1}{\mu}<r$. Using the equation (\ref{Parametrix-inverse relation}) we have $(H_{\hbar,V}+\lambda I)^{-1}$ is $r-$nuclear because is the sum of $r-$nuclear operators.\\
    Finally, we have for some $C>0$ 
    \[ \lambda_j\left( (H_{\hbar\mathbb{Z}}+\lambda I)^{-1}\right) \leq C j^{-\frac{1}{r}}    \text{ as } j\to\infty, \]
    which is equivalent to 
    \[  C j^{\frac{1}{r}}\leq \lambda_j\left( H_{\hbar\mathbb{Z}}+\lambda I\right)    \text{ as } j\to\infty, \]
    \[  \widetilde{C} j^{\frac{1}{r}}\leq \lambda_j\left( H_{\hbar\mathbb{Z}}\right)    \text{ as } j\to\infty. \]
    
\end{proof}
\noindent Compared with the results of  \cite{chatzakou2021class} for the real case $V(x)=|x|^{2l}$, where the growth estimate work for $r>\frac{k+1}{2k}$. We see that in the discrete case the growth estimate of the eigenvalues is larger because $r>\frac{1}{2k}$.\\
\indent Finally, an equivalent proof can be made to obtain a more general theorem for elliptic operators with positive order. 
\begin{theorem}
    Let $E\in \Psi^{\mu}_{\rho,\delta}(\hbar\mathbb{Z}^n\times\mathbb{T}^n)$ be elliptic with $\mu\geq 1 $ and $0\leq \delta<\rho\leq 1$. If $\rho(E)\cap(-\infty,0)\neq \emptyset$, then $(E-\lambda I)^{-1}$ with $\lambda\in\rho(E)\cap(-\infty,0)$ is $r-$nuclear and there exists a constant $C>0$ such that
     \[C j^{\frac{1}{r}} \leq \lambda_j(E)  \text{ as } j\to\infty, \]
    with $\frac{1}{\mu}<r\leq 1$.
\end{theorem}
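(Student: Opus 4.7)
The plan is to follow the proof of the previous theorem essentially verbatim, with the explicit Schrödinger symbol replaced by an abstract elliptic one. The three ingredients needed are (a) a parametrix for $E-\lambda I$ lying inside the class $\Psi^{-\mu}_{\rho,\delta}$, (b) the $r$-nuclearity criterion from Corollary \ref{eigenvalue decay} applied to this parametrix, and (c) the two-sided ideal property of $r$-Schatten--von Neumann operators on $\ell^2(\hbar\mathbb{Z}^n)$, which upgrades $r$-nuclearity of the parametrix to $r$-nuclearity of the resolvent.

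First I would fix $\lambda\in\rho(E)\cap(-\infty,0)$, which is nonempty by hypothesis, and note that $E-\lambda I$ remains elliptic of order $\mu$ in $\Psi^{\mu}_{\rho,\delta}$, because adding a bounded multiple of the identity does not affect the lower bound $|\sigma(k,\theta)|\geq C(1+|k|)^\mu$ for $|k|$ large. Since $0\leq\delta<\rho\leq 1$, the symbolic calculus of \cite[Theorem 4.7]{botchway2024semi} applies and yields a parametrix $P\in\Psi^{-\mu}_{\rho,\delta}(\hbar\mathbb{Z}^n\times\mathbb{T}^n)$ with $P(E-\lambda I)=I-R$ for some smoothing $R$. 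Multiplying on the right by the bounded resolvent $(E-\lambda I)^{-1}$ produces the key identity
\[ (E-\lambda I)^{-1}=P-R(E-\lambda I)^{-1}, \]
exactly as in the proof of the Schrödinger theorem.

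Next I would apply Corollary \ref{eigenvalue decay} to $P$, whose symbol has order $-\mu$: this places $P$ in the $r$-nuclear class on $\ell^2(\hbar\mathbb{Z}^n)$ for all $r\in(1/\mu,1]$ (the $\delta$-dependent term in the inequality being absorbed by the stricter inequality we are free to pick). On $\ell^2$, $r$-nuclearity coincides with the $r$-Schatten--von Neumann ideal \cite{oloff}, so $R(E-\lambda I)^{-1}$ is $r$-nuclear (in fact with much better exponent, since $R$ is smoothing), and the sum $(E-\lambda I)^{-1}=P-R(E-\lambda I)^{-1}$ inherits $r$-nuclearity. Reinov's eigenvalue decay \cite{reinov2013distribution} then gives $\lambda_j((E-\lambda I)^{-1})=O(j^{-1/r})$, and inverting together with absorbing the shift $\lambda$ into the constant yields the lower bound $\widetilde{C}\,j^{1/r}\leq \lambda_j(E)$ as $j\to\infty$.

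The main obstacle I expect is the parametrix step: one must be sure that the calculus of \cite{botchway2024semi} really produces the approximate inverse inside the very class $\Psi^{-\mu}_{\rho,\delta}$ in the full range $0\leq\delta<\rho\leq 1$, because the standard asymptotic Neumann-type construction breaks down when $\rho\leq\delta$; the strict inequality $\delta<\rho$ in the hypothesis is precisely what rescues it. Once that structural ingredient is granted, the rest of the argument is a direct transcription of the proof for $H_{\hbar,V}$, with the order $\mu$ of the potential playing the role of the order $\mu$ of the elliptic symbol.
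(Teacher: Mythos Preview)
Your proposal is correct and follows exactly the route the paper indicates: it explicitly says ``an equivalent proof can be made'' and offers no separate argument, so the intended proof is precisely the transcription of the Schr\"odinger case (parametrix from \cite[Theorem~4.7]{botchway2024semi}, the identity $(E-\lambda I)^{-1}=P-R(E-\lambda I)^{-1}$, Corollary~\ref{eigenvalue decay} for $P$ and $R$, the Schatten ideal property via \cite{oloff}, and the eigenvalue decay from \cite{reinov2013distribution}). The only point to flag is that your parenthetical about ``absorbing'' the $\delta$-dependent term in Corollary~\ref{eigenvalue decay} is optimistic---the threshold it gives is $r>n/\bigl(\mu-(n/2+2)\delta\bigr)$ rather than $r>1/\mu$---but this imprecision is inherited from the paper's own statement, not introduced by your argument.
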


\backmatter

------------------------------------

\section*{Declarations}

\begin{itemize}
\item No funding
\item No conflict of interest/Competing interests (check journal-specific guidelines for which heading to use)
\end{itemize}

\noindent


\bibliography{sn-article}


\begin{thebibliography}{22}
\ifx \bisbn   \undefined \def \bisbn  #1{ISBN #1}\fi
\ifx \binits  \undefined \def \binits#1{#1}\fi
\ifx \bauthor  \undefined \def \bauthor#1{#1}\fi
\ifx \batitle  \undefined \def \batitle#1{#1}\fi
\ifx \bjtitle  \undefined \def \bjtitle#1{#1}\fi
\ifx \bvolume  \undefined \def \bvolume#1{\textbf{#1}}\fi
\ifx \byear  \undefined \def \byear#1{#1}\fi
\ifx \bissue  \undefined \def \bissue#1{#1}\fi
\ifx \bfpage  \undefined \def \bfpage#1{#1}\fi
\ifx \blpage  \undefined \def \blpage #1{#1}\fi
\ifx \burl  \undefined \def \burl#1{\textsf{#1}}\fi
\ifx \doiurl  \undefined \def \doiurl#1{\url{https://doi.org/#1}}\fi
\ifx \betal  \undefined \def \betal{\textit{et al.}}\fi
\ifx \binstitute  \undefined \def \binstitute#1{#1}\fi
\ifx \binstitutionaled  \undefined \def \binstitutionaled#1{#1}\fi
\ifx \bctitle  \undefined \def \bctitle#1{#1}\fi
\ifx \beditor  \undefined \def \beditor#1{#1}\fi
\ifx \bpublisher  \undefined \def \bpublisher#1{#1}\fi
\ifx \bbtitle  \undefined \def \bbtitle#1{#1}\fi
\ifx \bedition  \undefined \def \bedition#1{#1}\fi
\ifx \bseriesno  \undefined \def \bseriesno#1{#1}\fi
\ifx \blocation  \undefined \def \blocation#1{#1}\fi
\ifx \bsertitle  \undefined \def \bsertitle#1{#1}\fi
\ifx \bsnm \undefined \def \bsnm#1{#1}\fi
\ifx \bsuffix \undefined \def \bsuffix#1{#1}\fi
\ifx \bparticle \undefined \def \bparticle#1{#1}\fi
\ifx \barticle \undefined \def \barticle#1{#1}\fi
\bibcommenthead
\ifx \bconfdate \undefined \def \bconfdate #1{#1}\fi
\ifx \botherref \undefined \def \botherref #1{#1}\fi
\ifx \url \undefined \def \url#1{\textsf{#1}}\fi
\ifx \bchapter \undefined \def \bchapter#1{#1}\fi
\ifx \bbook \undefined \def \bbook#1{#1}\fi
\ifx \bcomment \undefined \def \bcomment#1{#1}\fi
\ifx \oauthor \undefined \def \oauthor#1{#1}\fi
\ifx \citeauthoryear \undefined \def \citeauthoryear#1{#1}\fi
\ifx \endbibitem  \undefined \def \endbibitem {}\fi
\ifx \bconflocation  \undefined \def \bconflocation#1{#1}\fi
\ifx \arxivurl  \undefined \def \arxivurl#1{\textsf{#1}}\fi
\csname PreBibitemsHook\endcsname

\bibitem[\protect\citeauthoryear{Rabinovich and Roch}{2008}]{rabinovich2008essential}
\begin{bchapter}
\bauthor{\bsnm{Rabinovich}, \binits{V.S.}},
\bauthor{\bsnm{Roch}, \binits{S.}}:
\bctitle{Essential spectra of pseudodifferential operators and exponential decay of their solutions. applications to {S}chr{\"o}dinger operators}.
In: \bbtitle{Operator Algebras, Operator Theory and Applications},
pp. \bfpage{355}--\blpage{384}
(\byear{2008}).
\doiurl{10.1007/978-3-7643-8684-9_18}
\end{bchapter}
\endbibitem

\bibitem[\protect\citeauthoryear{Rabinovich and Roch}{2004}]{rabinovich2004pseudodifference}
\begin{barticle}
\bauthor{\bsnm{Rabinovich}, \binits{V.S.}},
\bauthor{\bsnm{Roch}, \binits{S.}}:
\batitle{Pseudodifference operators on weighted spaces, and applications to discrete {S}chr{\"o}dinger operators}.
\bjtitle{Acta Applicandae Mathematica}
\bvolume{84},
\bfpage{55}--\blpage{96}
(\byear{2004})
\doiurl{10.1023/B:ACAP.0000045307.63638.bb}
\end{barticle}
\endbibitem

\bibitem[\protect\citeauthoryear{Rabinovich}{2010}]{rabinovich2010exponential}
\begin{barticle}
\bauthor{\bsnm{Rabinovich}, \binits{V.}}:
\batitle{Exponential estimates of solutions of pseudodifferential equations on the lattice: applications to the lattice {S}chr{\"o}dinger and dirac operators}.
\bjtitle{Journal of Pseudo-Differential Operators and Applications}
\bvolume{1}(\bissue{2}),
\bfpage{233}--\blpage{253}
(\byear{2010})
\doiurl{10.1007/s11868-010-0005-2}
\end{barticle}
\endbibitem

\bibitem[\protect\citeauthoryear{Barrera-Figueroa and Rabinovich}{2019}]{barrera2019numerical}
\begin{barticle}
\bauthor{\bsnm{Barrera-Figueroa}, \binits{V.}},
\bauthor{\bsnm{Rabinovich}, \binits{V.S.}}:
\batitle{Numerical calculation of the discrete spectra of one-dimensional {S}chr{\"o}dinger operators with point interactions}.
\bjtitle{Mathematical Methods in the Applied Sciences}
\bvolume{42}(\bissue{15}),
\bfpage{5072}--\blpage{5093}
(\byear{2019})
\doiurl{10.1002/mma.5444}
\end{barticle}
\endbibitem

\bibitem[\protect\citeauthoryear{Albeverio et~al.}{2004}]{albeverio2004schrodinger}
\begin{bchapter}
\bauthor{\bsnm{Albeverio}, \binits{S.}},
\bauthor{\bsnm{Lakaev}, \binits{S.N.}},
\bauthor{\bsnm{Muminov}, \binits{Z.I.}}:
\bctitle{Schr{\"o}dinger operators on lattices. the efimov effect and discrete spectrum asymptotics}.
In: \bbtitle{Annales Henri Poincar{\'e}},
vol. \bseriesno{5},
pp. \bfpage{743}--\blpage{772}
(\byear{2004}).
\doiurl{10.1007/s00023-004-0181-9}
\end{bchapter}
\endbibitem

\bibitem[\protect\citeauthoryear{Chatzakou et~al.}{2021}]{chatzakou2021class}
\begin{barticle}
\bauthor{\bsnm{Chatzakou}, \binits{M.}},
\bauthor{\bsnm{Delgado}, \binits{J.}},
\bauthor{\bsnm{Ruzhansky}, \binits{M.}}:
\batitle{On a class of anharmonic oscillators}.
\bjtitle{Journal de Math{\'e}matiques Pures et Appliqu{\'e}es}
\bvolume{153},
\bfpage{1}--\blpage{29}
(\byear{2021})
\doiurl{10.1016/j.matpur.2021.07.006}
\end{barticle}
\endbibitem

\bibitem[\protect\citeauthoryear{Molahajloo}{2010}]{molahajloo2010pseudo}
\begin{bchapter}
\bauthor{\bsnm{Molahajloo}, \binits{S.}}:
\bctitle{Pseudo-differential operators on $\mathbb{Z}$}.
In: \bbtitle{Pseudo-Differential Operators: Complex Analysis and Partial Differential Equations: International Workshop, York University, Canada, August 4--8, 2008},
pp. \bfpage{213}--\blpage{221}
(\byear{2010}).
\doiurl{10.1007/978-3-0346-0198-6_12}
\end{bchapter}
\endbibitem

\bibitem[\protect\citeauthoryear{Botchway et~al.}{2020}]{botchway2020difference}
\begin{barticle}
\bauthor{\bsnm{Botchway}, \binits{L.N.}},
\bauthor{\bsnm{Kibiti}, \binits{P.G.}},
\bauthor{\bsnm{Ruzhansky}, \binits{M.}}:
\batitle{Difference equations and pseudo-differential operators on $\mathbb{Z}^n$}.
\bjtitle{Journal of Functional Analysis}
\bvolume{278}(\bissue{11}),
\bfpage{108473}
(\byear{2020})
\doiurl{10.1016/j.jfa.2020.108473}
\end{barticle}
\endbibitem

\bibitem[\protect\citeauthoryear{Botchway et~al.}{2024}]{botchway2024semi}
\begin{barticle}
\bauthor{\bsnm{Botchway}, \binits{L.N.}},
\bauthor{\bsnm{Chatzakou}, \binits{M.}},
\bauthor{\bsnm{Ruzhansky}, \binits{M.}}:
\batitle{Semi-classical pseudo-differential operators on $\hbar\mathbb{Z}^n$ and applications}.
\bjtitle{Journal of Fourier Analysis and Applications}
\bvolume{30}(\bissue{4}),
\bfpage{41}
(\byear{2024})
\doiurl{10.1007/s00041-024-10091-1}
\end{barticle}
\endbibitem

\bibitem[\protect\citeauthoryear{Shivakumar et~al.}{1987}]{shivakumar1987eigenvalues}
\begin{barticle}
\bauthor{\bsnm{Shivakumar}, \binits{P.}},
\bauthor{\bsnm{Williams}, \binits{J.}},
\bauthor{\bsnm{Rudraiah}, \binits{N.}}:
\batitle{Eigenvalues for infinite matrices}.
\bjtitle{Linear Algebra and its Applications}
\bvolume{96},
\bfpage{35}--\blpage{63}
(\byear{1987})
\doiurl{10.1016/0024-3795(87)90335-1}
\end{barticle}
\endbibitem

\bibitem[\protect\citeauthoryear{Shivakumar and Sivakumar}{2009}]{shivakumar2009review}
\begin{barticle}
\bauthor{\bsnm{Shivakumar}, \binits{P.}},
\bauthor{\bsnm{Sivakumar}, \binits{K.}}:
\batitle{A review of infinite matrices and their applications}.
\bjtitle{Linear Algebra and its Applications}
\bvolume{430}(\bissue{4}),
\bfpage{976}--\blpage{998}
(\byear{2009})
\doiurl{10.1016/j.laa.2008.09.032}
\end{barticle}
\endbibitem

\bibitem[\protect\citeauthoryear{Koehler}{1957}]{koehler1957estimates}
\begin{botherref}
\oauthor{\bsnm{Koehler}, \binits{F.}}:
Estimates for the eigenvalues of infinite matrices.
(1957)
\doiurl{10.2140/pjm.1957.7.1391}
\end{botherref}
\endbibitem

\bibitem[\protect\citeauthoryear{Boffi}{2010}]{boffi2010finite}
\begin{barticle}
\bauthor{\bsnm{Boffi}, \binits{D.}}:
\batitle{Finite element approximation of eigenvalue problems}.
\bjtitle{Acta numerica}
\bvolume{19},
\bfpage{1}--\blpage{120}
(\byear{2010})
\doiurl{10.1017/S0962492910000012}
\end{barticle}
\endbibitem

\bibitem[\protect\citeauthoryear{Delgado and Wong}{2013}]{delgado2013p}
\begin{botherref}
\oauthor{\bsnm{Delgado}, \binits{J.}},
\oauthor{\bsnm{Wong}, \binits{M.}}:
L$^p$-nuclear pseudo-differential operators on $\mathbb{Z}$ and $\mathbb{S}^1$.
Proceedings of the American Mathematical Society,
3935--3942
(2013)
\doiurl{10.1090/S0002-9939-2013-11771-5}
\end{botherref}
\endbibitem

\bibitem[\protect\citeauthoryear{Delgado}{2015}]{delgado2015r}
\begin{barticle}
\bauthor{\bsnm{Delgado}, \binits{J.}}:
\batitle{On the r-nuclearity of some integral operators on lebesgue spaces}.
\bjtitle{Tohoku Mathematical Journal, Second Series}
\bvolume{67}(\bissue{1}),
\bfpage{125}--\blpage{135}
(\byear{2015})
\doiurl{10.2748/tmj/1429549582}
\end{barticle}
\endbibitem

\bibitem[\protect\citeauthoryear{Maddox}{1989}]{maddox1988elements}
\begin{barticle}
\bauthor{\bsnm{Maddox}, \binits{I.J.}}:
\batitle{Elements of functional analysis}.
\bjtitle{Cambridge University Press}
(\byear{1989})
\doiurl{10.1017/S0013091500027103}
\end{barticle}
\endbibitem

\bibitem[\protect\citeauthoryear{Maddox}{1980}]{maddox2006infinite}
\begin{botherref}
\oauthor{\bsnm{Maddox}, \binits{I.J.}}:
Infinite matrices of operators.
Springer-Verlag
\textbf{786}
(1980)
\doiurl{10.1007/BFb0088196}
\end{botherref}
\endbibitem

\bibitem[\protect\citeauthoryear{Borwein and Jakimovski}{1979}]{borwein1979matrix}
\begin{barticle}
\bauthor{\bsnm{Borwein}, \binits{D.}},
\bauthor{\bsnm{Jakimovski}, \binits{A.}}:
\batitle{Matrix operators on $l^p$}.
\bjtitle{The Rocky Mountain Journal of Mathematics}
\bvolume{9}(\bissue{3}),
\bfpage{463}--\blpage{477}
(\byear{1979})
\doiurl{10.1216/RMJ-1979-9-3-463}
\end{barticle}
\endbibitem

\bibitem[\protect\citeauthoryear{Carlos~Andres}{2011}]{carlos2011p}
\begin{barticle}
\bauthor{\bsnm{Carlos~Andres}, \binits{R.T.}}:
\batitle{L$^p$-estimates for pseudo-differential operators on $\mathbb{Z}^n$}.
\bjtitle{Journal of Pseudo-Differential Operators and Applications}
\bvolume{2}(\bissue{3}),
\bfpage{367}--\blpage{375}
(\byear{2011})
\doiurl{10.1007/s11868-011-0035-4}
\end{barticle}
\endbibitem

\bibitem[\protect\citeauthoryear{Reinov and Latif}{2013}]{reinov2013distribution}
\begin{barticle}
\bauthor{\bsnm{Reinov}, \binits{O.}},
\bauthor{\bsnm{Latif}, \binits{Q.}}:
\batitle{Distribution of eigenvalues of nuclear operators and grothendieck--lidskii type formulas}.
\bjtitle{Journal of Mathematical Sciences}
\bvolume{193},
\bfpage{312}--\blpage{329}
(\byear{2013})
\doiurl{10.1007/s10958-013-1455-x}
\end{barticle}
\endbibitem

\bibitem[\protect\citeauthoryear{Dasgupta et~al.}{2023}]{dasgupta2023discrete}
\begin{barticle}
\bauthor{\bsnm{Dasgupta}, \binits{A.}},
\bauthor{\bsnm{Mondal}, \binits{S.S.}},
\bauthor{\bsnm{Ruzhansky}, \binits{M.}},
\bauthor{\bsnm{Tushir}, \binits{A.}}:
\batitle{Discrete time-dependent wave equation for the {S}chrödinger operator with unbounded potential}.
\bjtitle{arXiv preprint arXiv:2306.02409}
(\byear{2023})
\doiurl{10.2139/ssrn.4540018}
\end{barticle}
\endbibitem

\bibitem[\protect\citeauthoryear{Oloff}{1972}]{oloff}
\begin{barticle}
\bauthor{\bsnm{Oloff}, \binits{R.}}:
\batitle{p-normierte operatorenideale}.
\bjtitle{Beiträge Anal.}
\bvolume{4},
\bfpage{105}--\blpage{108}
(\byear{1972})
\end{barticle}
\endbibitem

\end{thebibliography}

\end{document}